\theoremstyle{plain} 
\newtheorem{theorem}{Theorem}[section]
\newtheorem{proposition}{Proposition}[section]
\newtheorem{lemma}[proposition]{Lemma}
\theoremstyle{plain}
\newtheorem{definition}[proposition]{Definition}  
\newtheorem{remark}[proposition]{Remark}  
\newtheorem{example}[proposition]{Example} 
\newtheorem{assumption}[proposition]{Assumption}  
\theoremstyle{nonumberplain}
\newtheorem{proof}{Proof}
\definecolor{refkey}{rgb}{0,1,0} 
\definecolor{labelkey}{rgb}{0,1,1}
\DeclareMathAlphabet{\mathpzc}{OT1}{pzc}{m}{it}
\newcommand\redout{\bgroup\markoverwith
{\textcolor{red}{\rule[.5ex]{2pt}{0.75pt}}}\ULon}
\newcommand\soutmath[2][red]{\setbox0=\hbox{$\,#2\,$}%
\rlap{\raisebox{.325\ht0}{\textcolor{#1}{\rule{\wd0}{.75pt}}}}\textcolor{red}{\,#2\,}} 
\newcommand\souteq[2][red]{\setbox0=\hbox{$#2$}%
$$\rlap{\raisebox{.325\ht0}{\textcolor{#1}{\rule{\wd0}{.75pt}}}}\textcolor{red}{\text{$#2$}}$$}
\numberwithin{equation}{section}
\newcommand{\mbbB}{\mathbb{B}}
\newcommand{\mbbP}{\mathbb{P}}
\newcommand{\mbbR}{\mathbb{R}}
\newcommand{\mbbS}{\mathbb{S}}
\newcommand{\mbbU}{\mathbb{U}}
\newcommand{\mbbV}{\mathbb{V}}
\newcommand{\mbbW}{\mathbb{W}}
\newcommand{\CAO}{C_{\text{\tiny$\mathrm{AO}$}}}
\newcommand{\CBM}{C_{\text{\tiny$\mathrm{BM}$}}}
\newcommand{\CPF}{C_{\text{\tiny$\mathrm{PF}$}}}
\newcommand{\Grho}{W^\rho(\bsbeta;\Omega)}
\newcommand{\Gsigma}{W^\sigma(\bsbeta;\Omega)}
\newcommand{\Grhominus}{W^\rho_{0,-}(\bsbeta;\Omega)}
\newcommand{\Grhoplus}{W^\rho_{0,+}(\bsbeta;\Omega)}
\newcommand{\Grhopm}{W^\rho_{0,\pm}(\bsbeta;\Omega)}
\newcommand{\Gp}{W^p(\bsbeta;\Omega)}
\newcommand{\Gpminus}{W^p_{0,-}(\bsbeta;\Omega)}
\newcommand{\Gq}{W^q(\bsbeta;\Omega)}
\newcommand{\Gqplus}{W^q_{0,+}(\bsbeta;\Omega)}
\newcommand{\Ginf}{W^\infty(\bsbeta;\Omega)}
\newcommand{\Ginfplus}{W^\infty_{0,+}(\bsbeta;\Omega)}
\newcommand{\Ginfminus}{W^\infty_{0,-}(\bsbeta;\Omega)}
\newcommand{\sign}{\operatorname{sign}}
\newcommand{\Cae}{\widetilde{C}}
\definecolor{darkgray}{gray}{0.4}
\definecolor{ddarkgray}{gray}{0.2}
\definecolor{redgray}{rgb}{0.5,0.25,0.25}
\definecolor{bluegray}{rgb}{0.25,0.25,0.5}
\renewcommand*{\thefootnote}{\fnsymbol{footnote}}
\title{\Large \textsc{%
The Discrete-Dual Minimal-Residual Method (\textsf{DDMRes}) for Weak Advection-Reaction Problems in Banach spaces 
}%
}
\author{\large\textbf{
I.~Muga\footnotemark[2]\, $\boldsymbol{\,\cdot\,}$ 
M.J.W.~Tyler\footnotemark[3]\, $\boldsymbol{\,\cdot\,}$ 
K.G.~van der Zee\footnotemark[4]}
\\
{\footnotesize {1$^{\mathrm{st}}$ August, 2018}}}
\date{\vspace*{-1.5\baselineskip}}
\newlength{\bigfboxsep}
\newcommand{\bigfbox}[1]{\setlength{\fboxsep}{\bigfboxsep}\fbox{#1}}
\newcommand{\negquad}{\mspace{-18.0mu}}
\newcommand{\negqquad}{\negquad\negquad}
\newcommand{\norm}[1]{{\|#1\|}}
\newcommand{\bignorm}[1]{\big\|#1\big\|}
\newcommand{\snorm}[1]{{|#1|}}
\newcommand{\enorm}[1]{{|\! |\! | #1 |\! |\! |}}
\newcommand{\dual}[1]{\langle#1\rangle}
\newcommand{\bigdual}[1]{\big\langle#1\big\rangle}
\newcommand{\Bigdual}[1]{\Big\langle#1\Big\rangle}
\newcommand{\jump}[1]{{[\! [#1 ]\! ]}}
\newcommand{\ds}[1]{{\displaystyle{#1}}}
\renewcommand{\div}{\operatorname{div}}
\providecommand{\grad}{\nabla}
\renewcommand{\grad}{\nabla}
\newcommand{\Span}{\operatorname{Span}}
\newcommand{\pd}{\partial}
\newcommand{\bs}[1]{\boldsymbol{#1}}
\newcommand{\bsn}{\bs{n}}
\newcommand{\bsbeta}{\bs{\beta}}
\newcommand{\mcal}[1]{\mathcal{#1}}
\newcommand{\mcC}{\mcal{C}}
\newcommand{\mcF}{\mcal{F}}
\newcommand{\mcT}{\mcal{T}}
\newcommand{\<}{\left<}
\renewcommand{\>}{\right>}
\begin{document}
%
%
\maketitle
%
%
\footnotetext[2]{Pontificia Universidad Cat\'olica de Valpara\'iso, Instituto de Matem\'aticas,\\ \hspace*{2em}\mbox{ignacio.muga@pucv.cl}}
\footnotetext[3]{University of Nottingham, School of Mathematical Sciences,\\ \hspace*{2em}\mbox{pmymt7@nottingham.ac.uk}}
\footnotetext[4]{University of Nottingham, School of Mathematical Sciences,\\ \hspace*{2em}\mbox{kg.vanderzee@nottingham.ac.uk}}
\renewcommand{\thefootnote}{\arabic{footnote}}
%
%
\begin{abstract}
\noindent
\normalsize
We propose and analyse a minimal-residual method in discrete dual norms for approximating the solution of the advection-reaction equation in a weak Banach-space setting. The weak formulation allows for the direct approximation of solutions in the Lebesgue $L^p$-space, $1<p<\infty$. The greater generality of this weak setting is natural when dealing with rough data and highly irregular solutions, and when enhanced qualitative features of the approximations are needed.
\par
We first present a rigorous analysis of the well-posedness of the underlying continuous weak formulation, under natural assumptions on the advection--reaction coefficients. The main contribution is the study of several discrete subspace pairs guaranteeing the discrete stability of the method and quasi-optimality in~$L^p$, and providing numerical illustrations of these findings, including the elimination of Gibbs phenomena, computation of optimal test spaces, and application to 2-D advection.


\end{abstract}
{\footnotesize
\parbox[t]{\textwidth}{%
\textbf{Keywords~} 
Residual minimization 
$\cdot$~Discrete dual norms
$\cdot$~DDMRes
$\cdot$~Advection--reaction
$\cdot$~Banach spaces 
$\cdot$~Fortin condition
$\cdot$~Compatible FE pairs
$\cdot$~Petrov--Galerkin method
\\[.5\baselineskip]
\textbf{Mathematics Subject Classification~}
41A65 
$\cdot$~65J05 
$\cdot$~46B20 
$\cdot$~65N12 
$\cdot$~65N15 
$\cdot$~35L02 
$\cdot$~35J25 
}
}
\newpage
\setcounter{tocdepth}{2}
{
\tableofcontents
}
%
%
\pagestyle{myheadings}
\thispagestyle{plain}
\markboth{\small \textit{I.~Muga, M.~Tyler and K.G.~van der Zee}}{\small \textit{Weak Advection--Reaction in Banach Spaces}}
%
\newcommand{\DDResMin}{\mbox{\textsf{DDMRes}}}
\section{Introduction}
\label{sec:intro}
Residual minimization encapsulates the idea that an approximation to the solution~$u\in\mbbU$ of an (infinite-dimensional) operator equation $Bu = f$, can be found by minimizing the norm of the residual $f-Bw_n$ amongst all $w_n$ in some finite-dimensional subspace~$\mbbU_n\subset \mbbU$. This powerful idea provides a stable and convergent discretization method under quite general assumptions, i.e., when $B:\mbbU\rightarrow \mbbV^*$ is any linear continuous bijection from Banach space~$\mbbU$~onto the dual~$\mbbV^*$ of a Banach space~$\mbbV$,  $f\in \mbbV^*$, and $\operatorname{dist}(u,\mbbU_n) \rightarrow 0$ as $n\rightarrow \infty$; see, e.g.,~Guermond~\cite[Section~2]{GueSINUM2004} for details. Note that this applies to well-posed weak formulations of linear partial differential equations (PDEs), in which case~$B$ is induced by the underlying bilinear form (i.e., $\dual{Bw,v} = b(w,v) \,, \,\forall w\in \mbbU, \,\forall v\in \mbbV$). As such, residual minimization is essentially an ideal methodology for non-coercive and/or nonsymmetric problems. 
\par
However, for many weak formulations of PDEs, $\mbbV^*$ is a \emph{negative} space (such as $H^{-m}(\Omega)$, or more generally $W^{-m,p}(\Omega)$, which is the dual of the Sobolev space~$W^{m,q}_0(\Omega)$, where $1<p<\infty$, $p^{-1}+q^{-1}=1$, $m=1,2,\ldots,$ or the dual of a~graph space). In that case, this requires the minimization of the residual in the \emph{non-computable} dual norm~$\norm{\cdot}_{\mbbV^*}$. To make this tractable, one can instead minimize in a \emph{discrete} dual norm. In other words, one aims to find an approximation~$u_n\in\mbbU_n$ such that~$\bignorm{f-B u_n}_{(\mbbV_m)^*}$ is minimal, where~$\mbbV_m$ is some finite-dimensional subspace of~$\mbbV$. We refer to this discretization method as residual minimization in discrete dual norms, or simply as the~\DDResMin{} method (\emph{Discrete-Dual Minimal-Residual} method).
\par
%
In this paper, we consider the~\DDResMin{} method when applied to a canonical linear first-order PDE in weak Banach-space settings. In particular, we consider the advection--reaction operator $u\mapsto \bsbeta \cdot \grad u + \mu \,u$, with~$\bsbeta:\Omega\rightarrow \mathbb{R}^d$ and $\mu:\Omega\rightarrow \mathbb{R}$ given advection--reaction coefficients, in a functional setting for which the solution space~$\mbbU$ is~$L^p(\Omega)$, $1<p<\infty$, and $\mbbV$ is a suitable Banach graph space (see Section~\ref{sec:weakAdvRea} for details). 
This weak setting allows for the direct approximation of \emph{irregular} solutions, while the greater generality of Banach spaces (over more common Hilbert spaces) is useful for example in the extension to nonlinear hyperbolic PDEs~\cite{HolRisBOOK2015},%
\footnote{Cf.~\cite{ChaDemMosCF2014, CarBriHelWri2017, CanHeuHAL2018} for nonlinear PDEs examples in Hilbert-space settings using a DPG approach.} 
as well as in approximating solutions with discontinuities (allowing the elimination of Gibbs phenomena; see further details below).
%
%
\par
It has recently become clear that many methods are equivalent to~\DDResMin, the most well-known being the \emph{discontinuous Petrov--Galerkin} (DPG) method (for which $B$~corresponds to a hybrid formulation of the underlying problem, so that $\mbbV$ is a broken Sobolev-type space), see Demkowicz and Gopalakrishnan~\cite{DemGopBOOK-CH2014}, and the \emph{Petrov--Galerkin method with projected optimal test spaces}, see~Dahmen et al.~\cite{DahHuaSchWelSINUM2012}. While these methods require~$\mbbU$ and $\mbbV$ to be Hilbert spaces, in more general Banach spaces the \DDResMin{} method is equivalent to certain (inexact) \emph{nonlinear Petrov--Galerkin} methods, or equivalently, mixed methods with monotone nonlinearity, where the nonlinearity originates from the nonlinear \emph{duality map}~$J_\mbbV:\mbbV\rightarrow \mbbV^*$; see Muga~\& Van~der~Zee~\cite{MugZeeARXIV2018} for details, including a schematic overview of connections to other methods. 
\par
%
%
%
%
The numerical analysis of the \DDResMin{} method has been carried out abstractly by Gopalakrishnan~\& Qiu~\cite{GopQiuMOC2014} in Hilbert spaces (see also~\cite[Section~3]{DahHuaSchWelSINUM2012}), and by Muga~\& Van~der~Zee~\cite{MugZeeARXIV2018} in smooth Banach spaces. A key requirement in these analyses is the \emph{Fortin} compatibility condition on the family of discrete subspace pairs~$(\mbbU_n,\mbbV_m)$ under consideration, which, once established, implies stability and quasi-optimal convergence of the method. In some sense, the Fortin condition is rather mild, since for a given~$\mbbU_n$, there is the expectation that it will be satisfied for a sufficiently large~$\mbbV_m$ (thereby making the discrete dual norm~$\norm{\cdot}_{(\mbbV_m)^*}$ sufficiently close to~$\norm{\cdot}_{\mbbV^*}$). Of course, whether this can be established depends crucially on the operator~$B$, therefore also on the particular weak formulation of the PDE that is being studied. 
%
\par
The main contribution of this paper consists in the study of several elementary discrete subspace pairs~$(\mbbU_n,\mbbV_m)$ for the \DDResMin{} method for weak advection--reaction, including proofs of Fortin compatibility in the above-mentioned Banach-space setting. It thereby provides the first application and corresponding analysis of~\DDResMin{} in genuine (non-Hilbert) Banach spaces. In particular, for the given compatible pairs, \DDResMin{} is thus a quasi-optimal method providing a near-best approximation in~$L^p(\Omega)$. 
%
%
Note that our results do not cover DPG-type hybrid weak formulations (with a broken graph space~$\mbbV$), so that our discrete spaces~$\mbbV_m$ are globally conforming. Broken Banach-space settings will be treated in forthcoming work. 
\par
We now briefly discuss some details of our results. To be able to carry out the analysis, our results focus on discrete subspace pairs~$(\mbbU_n,\mbbV_m)$, where $\mbbU_n$ is a lowest-order finite element space on mesh~$\mcT_n$ in certain specialized settings. 
We first consider \emph{continuous linear} finite elements in combination with continuous finite elements of degree~$k$, i.e., $\mbbU_n = \mbbP^1_{\mathrm{cont}}(\mcT_n)$ and~$\mbbV_m = \mbbP^k_{\mathrm{cont}}(\mcT_n)$. The Fortin condition holds when $k\ge 2$, assuming, e.g.,~incompressible pure advection ($\div \bsbeta =  \mu = 0$) in a one-dimensional setting. Interestingly, we demonstrate that the notorious \emph{Gibbs phenomenon} of spurious numerical over- and undershoots, commonly encountered while approximating discontinuous solutions with continuous approximations, can be \emph{eliminated} with the \DDResMin{}~method upon $p\rightarrow 1^+$  (see Section~\ref{sec:Gibbs}), which is in agreement with previous findings on $L^1$-methods~\cite{GueSINUM2004, LavSINUM1989}.
\par
We then consider $\mbbU_n = \mbbP^0(\mcT_n)$, that is, \emph{discontinuous piecewise-constant} approximations on arbitrary partitionings of the domain~$\Omega$ in~$\mathbb{R}^d$, $d\ge 1$. 
It turns out that it is possible to define an \emph{optimal} test space~$\mbbS_n := B^{-*}\mbbU_n$ and subsequently prove Fortin's condition for any~$\mbbV_m \supseteq \mbbS_n$. This result essentially hinges on the fact that $\mbbU_n$ is invariant under the $L^p$~duality map (see proof of Proposition~\ref{prop:AdvReac_compatible}). 
Since the optimal test space is however not explicit, it requires in general the computation of an explicit basis (see Section~\ref{sec:optimalTestSpace}). Such computations may not be feasible in practise, and in those cases, as an alternative, one could resort to sufficiently-rich~$\mbbV_m$, e.g.,~continuous linear finite elements on a sufficiently-refined submesh of the original mesh (cf.~\cite{BroDahSteMOC2018}). 
\par
Interestingly however, under certain special, yet nontrivial situations, the optimal test space~$\mbbS_n$ happens to coincide with a convenient finite element space. For example, in 2-D in the incompressible pure advection case with $\bsbeta$ piecewise constant on some partition, if $\mcT_n$ is a triangular mesh of~$\Omega$ (compatible with the partition) and all triangles are flow-aligned, then we prove that $\mbbS_n = \mbbP^1_{\mathrm{conf}}(\mcT_n)$, where $\mbbP^1_{\mathrm{conf}}(\mcT_n)$ refers to the space of piecewise-linear functions that are \emph{conforming} with respect to the graph space~$\mbbV$. Numerical experiments in 2-D indeed confirm in this case the quasi-optimality of~\DDResMin{} (see Section~\ref{sec:2DflowAlignedMesh}).
%
%
%
\par
In recent years, several similar methods for weak advection--reaction have appeared, all of which were in Hilbert-space settings (i.e., the solution space is~$L^2(\Omega)$) and use a broken weak formulation. Indeed, these include some of the initial DPG methods~\cite{DemGopCMAME2010, DemGopNMPDE2011, BuiDemGhaMOC2013}, which were proposed before the importance of Fortin's condition was clarified. Recently however, Broersen, Dahmen \&~Stevenson~\cite{BroDahSteMOC2018} studied a higher-order pair using standard finite-element spaces for the DPG method of weak advection--reaction. Under mild conditions on~$\bsbeta$, they proved Fortin's condition when~$\mbbU_n$ consists of piecewise polynomials of degree~$k$, and~$\mbbV_m$ consists of piecewise polynomials of higher degree over a sufficiently-deep refinement of the trial mesh. The extension of their proof, based on approximating the optimal test space, to any Banach-space setting seems nontrivial, since currently the concept of an optimal test space is in general absent in \DDResMin{} in Banach spaces (cf.~\cite{MugZeeARXIV2018}), exceptions notwithstanding (such as the lowest-order piecewise-constant case discussed above).  
\par
Let us finally point out that methods for weak advection--reaction are quite distinct from methods for \emph{strong} advection--reaction (which has its residual in~$L^p(\Omega)$ and a~priori demands more regularity on its solution).  Indeed, there is a plethora of methods in the strong case; see, e.g., Ern \&~Guermond~\cite[Chapter~5]{ErnGueBOOK2004} and Guermond~\cite{GueSINUM2004}, all of which typically exhibit \emph{suboptimal} convergence behaviour when measured in~$L^p(\Omega)$. In the context of \emph{strong} advection--reaction the results by Guermond~\cite{GueM2AN1999} are noteworthy, who proved the Fortin condition for several pairs, consisting of a low-order finite element space and its enrichment with bubbles. These results however do not apply to weak advection--reaction. Similarly for the stability result by Chan, Evans \&~Qiu~\cite{ChaEvaQiuCAMWA2014}.
\par
The remainder of this paper is arranged as follows. 
In Section~\ref{sec:prelim}, we first present preliminaries for the advection--reaction equation, allowing us to recall in Section~\ref{sec:weakAdvRea} the specifics of the well-posed Banach-space setting (cf.~Cantin~\cite{CanCR2017}). In particular, we provide a self-contained proof of the continuous inf-sup conditions using various properties of the $L^p$~duality map. Then, in Section~\ref{sec:AdvRea_discrete}, we consider the discrete problem corresponding to the \DDResMin{}~method in the equivalent form of the monotone mixed method, and establish stability and quasi-optimality of the method, provided the Fortin condition holds. In Section~\ref{sec:applications}, we consider particular discrete subspace pairs~$(\mbbU_n,\mbbV_m)$. This section contains several proofs of Fortin conditions, as well as some illustrative numerical examples pertaining to the Gibbs phenomena (Section~\ref{sec:Gibbs}), optimal test space basis (Section~\ref{sec:optimalTestSpace}), and quasi-optimal convergence for 2-D advection (Section~\ref{sec:2DflowAlignedMesh}).
\section{Advection--reaction preliminaries}
\label{sec:prelim}
%
For any dimension $d\geq1$, let $\Omega\subset\mathbb R^d$ be an open bounded domain, with Lipschitz boundary $\partial\Omega$ oriented by a unit outward normal vector $\bsn$. 
Let $\bsbeta\in L^\infty(\Omega)$ be an advection-field such that $\div \bsbeta\in L^\infty(\Omega)$ and let $\mu\in L^\infty(\Omega)$ be a (space-dependent) reaction coefficient. The advection-field splits the boundary~$\partial\Omega$ into an \emph{inflow}, \emph{outflow} and \emph{characteristic} part, which for continuous~$\bsbeta$ corresponds to
\begin{alignat*}{4}
& \partial\Omega_- 
&&:=\big\{x\in\partial\Omega:& \; \bsbeta(x)\cdot\bsn(x) & <0 & \big\}\,, 
\\
& \partial\Omega_+ 
&&:=\big\{x\in\partial\Omega:& \; \bsbeta(x)\cdot\bsn(x) & >0 & \big\}\,,
\\
& \partial\Omega_0 
&&:=\big\{x\in\partial\Omega:& \; \bsbeta(x)\cdot\bsn(x) & =0 & \big\}\,,
\end{alignat*}
respectively; see~\cite[Section~2]{BroDahSteMOC2018} for the definition of the parts in the more general case~$\bsbeta, \div \bsbeta \in L^\infty(\Omega)$ (which is based on the integration-by-parts formula~\eqref{eq:IntByP}). 
\par
Given a~possibly \emph{rough} source~$f_\circ$ and inflow data~$g$, the advection--reaction model is:
\begin{subequations}
\label{eq:strong_advreact}
\begin{empheq}[left=\left\{\,,right=\right.,box=]{alignat=2}
\bsbeta\cdot \nabla u + \mu u &= f_\circ  &\qquad & \text{in } \Omega\,,
\\
 u &= g && \text{on } \partial\Omega_-.
\end{empheq}
\end{subequations}
Before we give a weak formulation for this model and discuss it's well-posedness, we first introduce relevant assumptions and function spaces. We have in mind a weak setting where~$u\in L^p(\Omega)$, for any~$p$ in~$(1,\infty)$. Therefore, throughout this section, let~$1<p<\infty$ and let~$q\in (1,\infty)$ denote the \emph{conjugate} exponent of~$p$, satisfying the relation ${p}^{-1}+{q}^{-1}=1$.
\par
The following assumptions are natural extensions of the classical ones in the Hilbert case.
%
\begin{assumption}[Friedrich's positivity assumption]
\label{assump:mu_0}
There exists a constant~$\mu_0>0$ for which:
\begin{equation}\label{eq:beta-mu}
\mu(x) -{1\over p}\div\bsbeta(x)\geq \mu_0, \quad\hbox{ a.e. $x$ in } \Omega.
\end{equation}
\end{assumption}
%
\begin{assumption}[Well-separated in- and outflow]
\label{assump:split}
The in- and outflow boundaries are \emph{well-separated}, i.e., $\overline{\partial\Omega_-}\cap\overline{\partial\Omega_+}=\emptyset$ and, by partition of unity, there exists a function 
\begin{equation}\label{eq:cutoff}
\left\{
\begin{array}{l}
\phi\in C^\infty(\overline\Omega) \hbox{ such that:}\\
\quad \phi(x) =1, \quad\forall x\in \partial\Omega_-,\\
\quad \phi(x) =0, \quad\forall x\in \partial\Omega_+.
\end{array}\right.
\end{equation}
\end{assumption}
%
\par
For brevity we use the following notation for norms and duality pairings: 
\begin{subequations}
\label{eq:s-norm}
\begin{alignat}{4}
& \|\cdot\|_\infty &&=  \operatorname*{ess~sup}_{x\in \Omega}|\cdot(x)|\,,
\\
& \|\cdot\|_\rho &&=\bigg(\int_\Omega |\cdot|^\rho\bigg)^{1/\rho}\,,
&\qquad & \text{for } 1\le \rho < \infty\,,
\\
& \dual{\cdot,\cdot}_{\rho,\sigma} &&= \bigdual{\cdot,\cdot}_{L^\rho(\Omega),L^\sigma(\Omega)}\,,
&\qquad & \text{for } 1< \rho < \infty\,, \quad \sigma = \frac{\rho}{\rho-1}\,.
\end{alignat}
\end{subequations}
%
\begin{definition}[Graph space]
For $1\le \rho \le \infty$, the \emph{graph space} is defined by
$$
 \Grho :=\Big\{w\in L^\rho(\Omega) : \bsbeta\cdot\nabla w\in L^\rho(\Omega) \Big\},
$$
endowed with the norm 
\begin{alignat*}{2}
\|w\|^2_{\rho,\bsbeta}:=
\|w\|^2_\rho+\|\bsbeta\cdot\nabla w\|^2_\rho\,.
\end{alignat*}
The ``adjoint'' norm is defined by 
%
\begin{alignat}{2}
\label{eq:beta-norm}
\enorm{w}^2_{\rho,\bsbeta}:=\|w\|^2_\rho+\|\div(\bsbeta w) \|^2_\rho\,\,.
\end{alignat}
These norms are equivalent, which can be shown by means of the identity 
\begin{alignat}{2}\label{eq:beta_identity}
\div(\bsbeta w)=\div(\bsbeta)w + \bsbeta\cdot\nabla w\,.
\end{alignat}
\end{definition}
%
\begin{remark}[Graph-spaces and traces]
\label{rem:traces}
As a consequence of Assumption~\ref{assump:split}, traces on the space~$\Grho$ are well-defined as functions in the space
\begin{alignat*}{2}
L^\rho(|\bsbeta\cdot\bsn|;\partial\Omega):=\left\{w \hbox{ measurable in } \partial\Omega: \int_{\partial\Omega}|\bsbeta\cdot\bsn|\,|v|^\rho < +\infty\right\},
\end{alignat*}
and, moreover, for all $w\in \Grho$ and all $v\in \Gsigma$, the following integration-by-parts formula holds:
\begin{alignat}{2}
\label{eq:IntByP}
\int_\Omega \Big( (\bsbeta\cdot\nabla w)v+(\bsbeta\cdot\nabla v)w
+\div(\bsbeta)wv \Big)  = \int_{\partial\Omega}
(\bsbeta\cdot\bsn)wv\,.
\end{alignat}
The proof of these results is a straightforward extension of the Hilbert-space case given in, e.g., Di~Pietro~\& Ern~\cite[Section~2.1.5]{DipErnBOOK2012} (cf.~Dautray~\& Lions~\cite[Chapter~XXI, \S{}2, Section~2.2]{DauLioBOOK1993}~and~Cantin~\cite[Lemma~2.2]{CanCR2017}). We can thus define the following two closed subspaces, which are relevant for prescribing boundary conditions at $\pd\Omega_+$ or $\pd\Omega_-$:
\begin{alignat}{2}\label{eq:Wbeta}
\Grhopm
:=\Big\{w\in \Grho : w\big|_{\partial\Omega_\pm}=0 \Big\}\,.
\end{alignat}
%
%
%
\end{remark}
%
\begin{remark}[Non-separated in- and outflow]
The requirement of separated in- and outflow can be removed, but different trace operators have to be introduced~\cite{GopMonSepCAMWA2015}.
\end{remark}
%
%
\par
The case when $\mu\equiv 0$ and $\div\bsbeta\equiv 0$ is special, 
since Assumption~\ref{assump:mu_0} is not satisfied. An important tool for the analysis of this case is the so-called \emph{curved Poincar\'e-Friedrichs inequality}; see Lemma~\ref{lem:Poincare_Friedrichs} below. Its proof relies on the following assumption (cf.~\cite{AzePhD1996, AzePouCR1996}).
%
\begin{assumption}[$\Omega$-filling advection]\label{ass:omega-filling}
Let~$1<\rho<\infty$. If $\mu\equiv 0$ and $\div\bsbeta\equiv 0$, the 
advection-field~$\bsbeta$ is \emph{$\Omega$-filling}, by which we mean that there exist
$z_+,z_- \in \Ginf$ with~$\norm{z_+}_\infty, \norm{z_-}_\infty>0$, 
such that 
\begin{equation}\left\{
\begin{array}{rl}
\label{eq:omega-filling}
-\bsbeta\cdot\nabla z_\pm = \rho & \quad \text{in } \Omega\,, 
\\
 z_{\pm}  = 0 &\quad  \text{on } \pd\Omega_{\pm}\,.
\end{array}\right.
\end{equation}
%
%
%
\end{assumption}
\par
\begin{remark}[Method of characteristics]
Assumption~\ref{ass:omega-filling} holds, for example, if $\bsbeta$ is regular enough so that the method of characteristics can be employed to solve for~$z$ (cf.~Dahmen et al.~\cite[Remark 2.2]{DahHuaSchWelSINUM2012}).
%
%
\end{remark}
%
\begin{lemma}[Curved Poincar\'e--Friedrichs inequality]
\label{lem:Poincare_Friedrichs}
Let~$1<\rho<\infty$. Under the hypothesis that Assumption~\ref{ass:omega-filling} holds true, there exists a constant~$\CPF>0$ such that 
\begin{alignat}{2}
\|w\|_\rho \leq \CPF \|\bsbeta\cdot\nabla w\|_\rho\,\,, 
\quad \forall w\in \Grhopm\,.
\end{alignat}
\end{lemma}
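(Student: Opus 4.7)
The plan is to use the auxiliary functions $z_\pm$ from Assumption~\ref{ass:omega-filling} as weights to convert $\|w\|_\rho^\rho$ into an integral involving $\bsbeta\cdot\nabla w$. The sign convention is essential: to a function $w\in\Grhopm$ (which vanishes on $\partial\Omega_\pm$) I would pair the auxiliary $z_\mp$ (which vanishes on $\partial\Omega_\mp$), so that the product $|w|^\rho z_\mp$ vanishes on the entire boundary of $\Omega$, namely on $\partial\Omega_\pm$ (where $w=0$), on $\partial\Omega_\mp$ (where $z_\mp=0$), and on the characteristic part $\partial\Omega_0$ (where $\bsbeta\cdot\bsn=0$).

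Using $-\bsbeta\cdot\nabla z_\mp = \rho$ in $\Omega$, I would start from the identity
\[
\|w\|_\rho^\rho \;=\; -\frac{1}{\rho}\int_\Omega |w|^\rho\,(\bsbeta\cdot\nabla z_\mp),
\]
and apply the divergence theorem (valid since $\div\bsbeta \equiv 0$) to the product $\varphi := |w|^\rho z_\mp/\rho$. Invoking the chain rule $\bsbeta\cdot\nabla(|w|^\rho) = \rho|w|^{\rho-1}\sign(w)\,\bsbeta\cdot\nabla w$ together with the boundary-vanishing argument above, this produces
\[
\|w\|_\rho^\rho \;=\; \int_\Omega |w|^{\rho-1}\sign(w)\, z_\mp \,(\bsbeta\cdot\nabla w).
\]
A direct H\"older inequality with conjugate exponents $\rho$ and $\rho/(\rho-1)$ then gives
\[
\|w\|_\rho^\rho \;\leq\; \|z_\mp\|_\infty\,\|w\|_\rho^{\rho-1}\,\|\bsbeta\cdot\nabla w\|_\rho,
\]
and dividing by $\|w\|_\rho^{\rho-1}$ yields the claim with $\CPF := \max\{\|z_+\|_\infty,\|z_-\|_\infty\}$ (the case $w\equiv 0$ being trivial).

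The main technical obstacle is that formula~\eqref{eq:IntByP} does not directly apply to the nonlinear composition $|w|^\rho z_\mp$, which only lies in $L^1(\Omega)$ and is therefore not an element of a graph space $W^\rho(\bsbeta;\Omega)$. I would resolve this by a density argument: for $w\in C^\infty(\overline\Omega)$ vanishing in a neighbourhood of $\partial\Omega_\pm$ the chain-rule/divergence-theorem computation above is classical; one then extends to arbitrary $w\in\Grhopm$ by approximation in the graph norm. Convergence of the right-hand side is controlled by the H\"older bound $\bigl\||w|^{\rho-1}\sign(w)\bigr\|_{\rho/(\rho-1)} = \|w\|_\rho^{\rho-1}$, which stays uniformly bounded along the approximating sequence. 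The required density of such smooth functions in $\Grhopm$ is the usual Friedrichs-type statement that is available here thanks to Assumption~\ref{assump:split} on the separation of inflow and outflow (cf.\ the trace theory referenced in Remark~\ref{rem:traces}).
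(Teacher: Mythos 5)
Your proposal is correct and follows essentially the same route as the paper: the paper's proof uses the identity \eqref{eq:rho_id} with the same pairing $w\in\Grhopm \leftrightarrow z_\mp$, and its duality argument with the test function $\phi_w=\rho z\,|w|^{\rho-1}\sign(w)$ is exactly your H\"older estimate written as a $\sup$ over $L^{\sigma}(\Omega)$, yielding the same constant $\CPF=\|z_\mp\|_\infty$. Your closing remark on justifying the formal chain-rule/divergence-theorem step by density is a point the paper leaves implicit, but it does not change the argument.
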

%
%
\begin{proof}
For the Hilbert-space case ($\rho=2$), the proof can be found in~\cite{AzePouCR1996}. For completeness, we reproduce here the general $\rho$-version.
\par
Without loss of generality take $w\in \Grhominus$, and let~$z=z_+\in \Ginfplus$ as in Assumption~\ref{ass:omega-filling}. 
Notice the important identity
\begin{alignat}{2}
\label{eq:rho_id}
 z \,\bsbeta\cdot\nabla(|w|^\rho)=\div(\bsbeta  z |w|^\rho)-|w|^\rho\bsbeta\cdot\nabla z .
\end{alignat}
Let $\sigma={\rho/(\rho-1)}$. 
Take $\phi_w= \rho z |w|^{\rho-1}\sign(w)\in L^{\sigma}(\Omega)$, which satisfies
\begin{alignat}{2}\label{eq:phi_u}
\|\phi_w\|_{\sigma}\leq   \rho \,\| z \|_\infty
\|w\|_\rho^{\rho-1}\,\,.
\end{alignat}
Thus,
\begin{alignat}{2}
\tag{by duality}
\|\bsbeta\cdot\nabla w\|_\rho  & = \sup_{0\neq \phi\in L^{\sigma}(\Omega)}
{\bigdual{\bsbeta\cdot\nabla w,\phi}_{\rho,\sigma}\over \|\phi\|_{\sigma}}\\
\tag{since $\phi_w\in L^{\sigma}(\Omega)$}
 & \geq
{\bigdual{\bsbeta\cdot\nabla w,\phi_w}_{\rho,\sigma}\over \|\phi_w\|_{\sigma}}\\
\tag{by \eqref{eq:rho_id} and~$\int_{\pd\Omega} (\bsbeta\cdot\bsn) z |w|^\rho = 0$}
& = 
{\displaystyle
-\int_\Omega|w|^\rho\bsbeta\cdot\nabla z \over \|\phi_w\|_{\sigma}}\\
\tag{by Assumption~\ref{ass:omega-filling} and~\eqref{eq:phi_u}}
& \geq {\|w\|_\rho\over \| z \|_\infty}.
\end{alignat}
Hence, $\CPF=\| z \|_\infty$. If $w\in \Grhoplus$, take~$z=z_{-}\in \Ginfminus$.
\end{proof}
The proof of Lemma~\ref{lem:Poincare_Friedrichs} shows that~$\CPF=\norm{z_\pm}_\infty$, with~$z_\pm$ defined in~\eqref{eq:omega-filling}, hence $\CPF$ depends on~$\Omega$, $\bsbeta$ and~$\rho$.
%
\begin{remark}[Weaker statements]
Under a weaker condition than Assumption~\ref{assump:mu_0}, Lemma~\ref{lem:Poincare_Friedrichs} can be generalized to the following situations:
\begin{alignat}{2}
\|w\|_p & \lesssim  \|\mu w + \bsbeta\cdot\nabla w\|_p\,, \quad \forall w\in \Gpminus\,,\\
\|v\|_q & \lesssim  \|\mu v - \div(\bsbeta v)\|_q\,,  \quad \forall v\in \Gqplus\,.
\end{alignat}
Indeed, it is enough to verify the existence of a constant $\mu_0^*>0$ and a Lipschitz continuous function $\zeta(x)$ such that:
\begin{alignat}{2}\label{eq:weak beta-mu}
\mu(x) -{1\over p}\div\bsbeta(x) - {1\over p}\bsbeta(x)\cdot\nabla\zeta(x)
\geq \mu_0^*\,, \quad\hbox{ a.e. $x$ in } \Omega.
\end{alignat}
These statements can be inferred from the recent work of Cantin~\cite{CanCR2017}.  Notice that if Assumption~\ref{assump:mu_0} is satisfied, then~\eqref{eq:weak beta-mu} holds with $\zeta(x)\equiv 0$ and $\mu_0^*=\mu_0$.
\end{remark}
%
\section{A weak setting for advection--reaction}\label{sec:weakAdvRea}
The weak setting for the advection-reaction problem~\eqref{eq:strong_advreact} considers a trial space $\mathbb U:=L^p(\Omega)$ endowed with the $\|\cdot\|_p$-norm (see~\eqref{eq:s-norm}), and a test space $\mathbb V:=\Gqplus$ endowed with the norm $\enorm{\cdot}_{q,\bsbeta}$ (see~\eqref{eq:beta-norm}). The weak-formulation reads as follows:
\begin{empheq}[left=\left\{\,,right=\right.,box=]{alignat=2}
\notag & \text{Find }  u\in \mathbb U=L^p(\Omega) : 
\\
\label{eq:weak_advection-reaction}
& \quad  \dual{Bu,v}_{\mbbV^*,\mbbV}
 = \dual{f,v}_{\mathbb V^*,\mathbb V}\,\,,\quad \forall v\in \mathbb V=\Gqplus.
\end{empheq}
where $B:\mbbU\to\mbbV^*$ is defined by
\begin{equation}\label{eq:B_AdvReac}
\dual{Bw,v}_{\mbbV^*,\mbbV}:=\int_\Omega w\big (\mu v - \div(\bsbeta v) \big)\,,\qquad\forall w\in \mbbU, \,\forall v\in\mbbV,
\end{equation}
and 
the right-hand side~$f$ is related to the original PDE data~$(f_\circ,g)$ via:
\begin{alignat*}{2}
 \dual{f,v}_{\mathbb V^*,\mathbb V}
  = \int_\Omega f_\circ v  + \int_{\pd\Omega_-} |\bsbeta\cdot\bsn| g v \,,
\end{alignat*}
where $f_\circ$ is given in (for example) $L^p(\Omega)$ and $g$ is given in $L^p(|\bsbeta\cdot\bsn|;\pd\Omega)$. More rough~$f_\circ$ is allowed as long as $f\in [\Gqplus]^*$.
\begin{remark}[Boundedness]
\label{rem:mu_continuity}
The bilinear form in~\eqref{eq:weak_advection-reaction} is bounded with constant 
$M_\mu:=\sqrt{1+\|\mu\|_\infty^2}$. Indeed,
\begin{alignat*}{2}
\left|\displaystyle\int_\Omega u\left(\mu v-\div(\bsbeta v)\right)\right|\leq \|u\|_p \bignorm{\mu v-\div(\bsbeta v)}_q
\leq M_\mu\,\|u\|_p\,\enorm{v}_{q,\bsbeta}\,.
\end{alignat*}
\end{remark}
\par
The following result states the well-posedness of problem~\eqref{eq:weak_advection-reaction}. Although this result can be inferred from the recent result by Cantin~\cite{CanCR2017}, we provide a slightly alternative proof based on establishing the adjoint inf-sup conditions using properties of the $L^p$~duality map. For a classical proof of well-posedness in a similar Banach-space setting, we refer to~Beir\~{a}o Da Veiga~\cite{BeiRDM1987, BeiRSMUP1988} (cf.~\cite{BarLerNedCPDE1979} and~\cite[Chapter~XXI]{DauLioBOOK1993}). 
%
\begin{theorem}[Weak advection--reaction: Well-posedness]
\label{thm:avdreact_wellposed} 
Let~$1<p<\infty$ and $p^{-1} + q^{-1} = 1$. Let $\Omega\subset \mbbR^d$ be an open bounded domain with Lipschitz boundary. Let $\bsbeta:\Omega\to \mbbR$ and $\mu:\Omega\to \mbbR$ be advection and reaction coefficients (respectively) satisfying either Friedrich's positivity Assumption~\ref{assump:mu_0}, or the $\Omega$-filling Assumption~\ref{ass:omega-filling}. Assume further that in- and outflow boundary are well~separated (Assumption~\ref{assump:split}).
\begin{enumerate}[(i)]
\item
For any $f\in \mathbb V^*=[\Gqplus]^*$, there exists a unique solution $u\in L^p(\Omega)$ to the weak advection--reaction problem~\eqref{eq:weak_advection-reaction}.
\item
In the case that Assumption~\ref{assump:mu_0} holds true, we have the following a priori bound:
\begin{equation}\label{eq:C_mu}
\|u\|_p\leq \frac{1}{\gamma_{B}}\,\|f\|_{\mathbb V^*}\,, \qquad\hbox{ with} \quad \gamma_B = \sqrt{\mu_0^2\over
{1+(\mu_0+\|\mu\|_\infty)^2}}
\end{equation}
and $\mu_0>0$ being the constant in Assumption~\ref{assump:mu_0}. 
\item[(ii$\star$)]
On the other hand, in the case where Assumption~\ref{ass:omega-filling} holds true, we also have the a~priori bound~\eqref{eq:C_mu}, but $\gamma_B$ in~\eqref{eq:C_mu} must be replaced by the constant $1/(1+\CPF)$, where $\CPF>0$ is the Poincar\'e--Friedrichs constant in Lemma~\ref{lem:Poincare_Friedrichs}.
\end{enumerate}
~
\end{theorem}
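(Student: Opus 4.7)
The plan is to invoke the Banach--Nec\v{a}s--Babu\v{s}ka (BNB) theorem: boundedness with constant $M_\mu$ is already in Remark~\ref{rem:mu_continuity}, so it remains to prove (a) injectivity of~$B$ and (b) the adjoint a~priori estimate
\[ \bigenorm{v}_{q,\bsbeta}\;\leq\;\gamma_B^{-1}\bignorm{\mu v-\div(\bsbeta v)}_q \qquad\forall v\in\mbbV=\Gqplus, \]
with $\gamma_B$ as in~\eqref{eq:C_mu}. Both reduce to the same duality-map testing trick: multiply the relevant strong equation by $|w|^{r-2}w$ (for $r=p$ or $q$), apply the integration-by-parts identity~\eqref{eq:IntByP}, and use the pointwise chain rule $|w|^{r-2}w\,\bsbeta\cdot\nabla w=r^{-1}\bsbeta\cdot\nabla|w|^{r}$ together with the appropriate boundary vanishing.

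For~(b), let $\phi:=\mu v-\div(\bsbeta v)$ and test against $|v|^{q-2}v\in L^p(\Omega)$, the $L^q$-duality-map representative of~$v$. Since $v|_{\partial\Omega_+}=0$ the boundary integral is concentrated on $\partial\Omega_-$, producing the identity
\[\int_\Omega\phi\,|v|^{q-2}v \;=\; \int_\Omega\Big(\mu-\tfrac{1}{p}\div\bsbeta\Big)|v|^q \;+\; \tfrac{1}{q}\int_{\partial\Omega_-}|\bsbeta\cdot\bsn|\,|v|^q.\]
Under Assumption~\ref{assump:mu_0} the right-hand side is bounded below by $\mu_0\|v\|_q^{q}$; H\"{o}lder on the left combined with $\bignorm{|v|^{q-2}v}_p=\|v\|_q^{q-1}$ gives $\|v\|_q\leq\mu_0^{-1}\|\phi\|_q$, after which $\|\div(\bsbeta v)\|_q\leq\|\mu\|_\infty\|v\|_q+\|\phi\|_q$ packages into the precise $\gamma_B$ of~\eqref{eq:C_mu}. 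Under Assumption~\ref{ass:omega-filling} the equation collapses to $\phi=-\bsbeta\cdot\nabla v$, and Lemma~\ref{lem:Poincare_Friedrichs} gives $\|v\|_q\leq\CPF\|\phi\|_q$ directly, yielding the alternative constant $(1+\CPF)^{-1}$.

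For~(a), suppose $Bu=0$. Testing against $v\in C^\infty_c(\Omega)\subset\mbbV$ gives $\bsbeta\cdot\nabla u+\mu u=0$ distributionally; since $\mu u\in L^p$, this upgrades to $u\in\Gp$. Then testing against general $v\in\mbbV$ and using~\eqref{eq:IntByP} isolates the trace contribution on~$\partial\Omega_-$, forcing $u|_{\partial\Omega_-}=0$ by the graph-space trace theory of Remark~\ref{rem:traces}. The symmetric testing against $|u|^{p-2}u\in L^q(\Omega)$ now produces
\[ 0\;=\;\int_\Omega\Big(\mu-\tfrac{1}{p}\div\bsbeta\Big)|u|^p \;+\; \tfrac{1}{p}\int_{\partial\Omega_+}|\bsbeta\cdot\bsn|\,|u|^p, \]
which under Assumption~\ref{assump:mu_0} forces $u=0$; under Assumption~\ref{ass:omega-filling} it reduces to $\bsbeta\cdot\nabla u=0$ with $u\in\Gpminus$, and Lemma~\ref{lem:Poincare_Friedrichs} again forces $u=0$.

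Assembling: estimate~(b) shows $B^{*}\colon\mbbV\to L^q$ is bounded below, hence injective with closed range; combined with injectivity of~$B$ from~(a), the closed-range theorem in reflexive Banach spaces delivers $R(B^{*})=L^q$ and $R(B)=\mbbV^{*}$, proving~(i). For the a~priori bound~(ii)/(ii$\star$) I would argue by duality: for every $\phi\in L^q$ pick $v_\phi\in\mbbV$ with $B^{*}v_\phi=\phi$ and $\enorm{v_\phi}_{q,\bsbeta}\leq\gamma_B^{-1}\|\phi\|_q$; then $\int_\Omega u\,\phi=\dual{Bu,v_\phi}=\dual{f,v_\phi}\leq\|f\|_{\mbbV^{*}}\,\enorm{v_\phi}_{q,\bsbeta}$, and taking the supremum over $\phi$ of unit $L^q$-norm yields $\|u\|_p\leq\gamma_B^{-1}\|f\|_{\mbbV^{*}}$. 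The only delicate point I foresee is the distributional-to-strong passage in~(a)---in particular, extracting a well-defined inflow trace from a merely-$L^p$ datum---but this is standard graph-space trace theory under Assumption~\ref{assump:split}; aside from this technicality the proof is a doubled application of the duality-map testing identity.
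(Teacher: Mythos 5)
Your argument follows essentially the same route as the paper's: the adjoint bound is obtained by testing with the $L^q$ duality map $|v|^{q-2}v$ and the identity $\bsbeta\cdot\nabla v\,|v|^{q-1}\sign(v)=\tfrac{1}{q}\div(\bsbeta|v|^q)-\tfrac{1}{q}\div(\bsbeta)|v|^q$, injectivity of $B$ by the distributional equation, a trace argument, and the $|u|^{p-2}u$ energy identity; your constants agree with \eqref{eq:C_mu} and with $1/(1+\CPF)$. The only difference in the final assembly is cosmetic: you use the closed-range theorem plus a duality argument to get the bound on $\|u\|_p$, whereas the paper applies the BNB conditions to the adjoint form and uses that primal and adjoint bijections share the same inf-sup constant; both are fine.

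The one step you under-argue is exactly where the paper does real technical work: ``testing against general $v\in\mbbV$ \dots forcing $u|_{\partial\Omega_-}=0$ by the graph-space trace theory.'' From $\int_{\partial\Omega_-}(\bsbeta\cdot\bsn)\,u\,v=0$ for all $v\in\Gqplus$ you cannot conclude that the inflow trace vanishes merely from the existence of traces (Remark~\ref{rem:traces}); you would additionally need surjectivity, or at least density, of the restricted trace map of $\Gqplus$ onto $L^q(|\bsbeta\cdot\bsn|;\partial\Omega_-)$, a liftable but nontrivial claim that neither you nor the paper states. The paper avoids it by choosing the explicit test function $v=\phi\,\widetilde J_p(u)$ with $\widetilde J_p(u)=|u|^{p-1}\sign(u)$ and $\phi$ the cutoff from \eqref{eq:cutoff}; the admissibility of this test function is precisely the content of Lemma~\ref{lem:beta_nabla}, which uses the already-derived equation $\bsbeta\cdot\nabla u=-\mu u$ to show $\widetilde J_p(u)\in\Gq$, so that $\phi\,\widetilde J_p(u)\in\Gqplus$ and one gets $\int_{\partial\Omega_-}|\bsbeta\cdot\bsn|\,|u|^p=0$ directly. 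You flagged this point as ``standard'' and deferred it, but it is the one missing ingredient in your sketch: either supply the trace-density/lifting argument on $\partial\Omega_-$ or reproduce the paper's explicit test-function construction together with the regularity lemma that justifies it.
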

\begin{proof}
See Section~\ref{sec:avdreact_wellposed}.
\end{proof}
%
\section{The general discrete problem}
\label{sec:AdvRea_discrete}
We now consider the approximate solution of~\eqref{eq:weak_advection-reaction} given by the \DDResMin{} method, i.e., given finite-dimensional subspaces $\mathbb U_n\subset \mathbb U=L^p(\Omega)$ and 
$\mbbV_m\subset\mbbV=\Gqplus$, we aim to find $u_n\in\mathbb U_n$ such that:
\begin{equation}
\label{eq:discrete_residual}
u_n=\arg\min_{w_n\in \mbbU_n}\bignorm{f-Bw_n}_{(\mbbV_m)^*}\,\,,
\end{equation}
where the discrete dual norm is given by
\begin{alignat*}{2}
 \norm{\cdot}_{(\mbbV_m)^*} =
 \sup_{v_m\in \mbbV_m}{\< \,\cdot\, ,v_m\>_{(\mbbV_m)^*,\mbbV_m}\over \|v_m\|_{\mbbV}} 
\,.
\end{alignat*}
As proven in~\cite[Theorem~4.A]{MugZeeARXIV2018}, the minimization problem~\eqref{eq:discrete_residual} is equivalent to following monotone mixed method:
\begin{subequations}
\label{eq:AdvReac_discrete}
\begin{empheq}[left=\left\{\,,right=\right.,box=]{alignat=3}\notag
& \text{Find }  (r_m,u_n)\in \mathbb V_m\times\mathbb U_n \text{ such that} \negqquad
\\ \label{eq:AdvReac_discrete_a}
& \quad \bigdual{J_\mathbb V(r_m),v_m}_{\mathbb V^*,\mathbb V}
  +\<Bu_n,v_m\>_{\mathbb V^*,\mathbb V}
 &&  =\dual{f,v_m}_{\mathbb V^*,\mathbb V}\,, 
  &\quad & \forall v_m\in \mathbb V_m\,,
\\\label{eq:AdvReac_discrete_b}
& \quad \<Bw_n,r_m\>_{\mathbb V^*,\mathbb V}
 &&  = 0\,, 
  &\quad & \forall w_n\in \mathbb U_n\,.
\end{empheq}
\end{subequations}
In~\eqref{eq:AdvReac_discrete_a}, 
$J_\mathbb V:\mbbV\to\mbbV^*$ denotes the (monotone and nonlinear) duality map of $\mathbb V=\Gqplus$ defined by the action:
\begin{alignat}{2}
\label{adv:J}
\bigdual{J_\mathbb V(r),v}_{\mathbb V^*,\mathbb V}
:=\bigdual{J_q(r),v}_{p,q}
+\bigdual{J_q\big(\div(\bsbeta r)\big),\div(\bsbeta v)}_{p,q}
\qquad \forall r,v\in \mbbV\,,
\end{alignat}
where $J_q(v):= \|v\|_q^{2-q}|v|^{q-1}\sign v \in L^p(\Omega)$ is the duality map of~$L^q(\Omega)$. The solution $u_n\in\mbbU_n$ of~\eqref{eq:AdvReac_discrete} is exactly the residual minimizer of~\eqref{eq:discrete_residual}, while $r_m\in\mbbV_m$ is a \emph{representative of the discrete residual}, i.e., $J_\mbbV(r_m)=f-Bu_n$ in $(\mbbV_m)^*$.

%
The well-posedness of the discrete method~\eqref{eq:AdvReac_discrete} relies on the well-posedness of the continuous problem~\eqref{eq:weak_advection-reaction} (see Theorem~\ref{thm:avdreact_wellposed}), together with the following \emph{Fortin} assumption. 
\begin{assumption}[Fortin condition]
\label{assumpt:Fortin}
Let $B:\mbbU\to\mbbV^*$ be a bounded linear operator and let $\{(\mbbU_n,\mbbV_m)\}$ be a family of \emph{discrete} subspace pairs, where $\mbbU_n\subset\mbbU$ and $\mbbV_m\subset\mbbV$. For each pair $(\mbbU_n,\mbbV_m)$ in this family, there exists an operator $\Pi_{n,m}:\mbbV\to \mbbV_m$ and constants $C_\Pi>0$ (independent of $n$ and $m$) such that the following conditions are satisfied:
\begin{subequations}
\label{eq:Fortin}
\begin{empheq}[left=\left\{,right=\right.,box=]{alignat=2}
\label{eq:Fortin_a}
\,\, &
 \|\Pi_{n,m} v\|_\mathbb V\leq C_\Pi \|v\|_\mathbb V\,\,, & \quad & \forall v\in\mathbb V\,, 
\\ \label{eq:Fortin_c}
& \<Bw_n,v-\Pi_{n,m} v\>_{\mbbV^\ast,\mbbV}=0, &&  \forall w_n\in \mbbU_n,\,\forall v\in \mbbV,
\end{empheq}
\end{subequations}
where $I:\mathbb V\to\mbbV$ is the identity map in~$\mbbV$. For simplicity, we write $\Pi$ instead of~$\Pi_{n,m}$.
\end{assumption}
\begin{theorem}[Weak advection--reaction: \DDResMin{} method]\label{thm:AdvReac_discrete}
Under the conditions of Theorem~\ref{thm:avdreact_wellposed}, let the pair $(\mbbU_n,\mbbV_m)$ satisfy the (Fortin) Assumption~\ref{assumpt:Fortin} with operator~$B$ given by~\eqref{eq:B_AdvReac}.
\begin{enumerate}[(i)]
\item There exists a unique solution $(r_m,u_n)$ to~\eqref{eq:AdvReac_discrete}, which satisfies the a~priori bounds:
\begin{equation}\label{eq:AdvReac_apriori_bounds}
\enorm{r_m}_{q,\bsbeta}\leq \|f\|_{\mbbV^*}\qquad\hbox{ and }\qquad
\|u_n\|_p \leq \Cae\, \|f\|_{\mbbV^*}\,,
\end{equation}
with $\Cae := C_\Pi\,\left(1+\CAO(\mbbV)\right)/ \gamma_B\,$.
\item Moreover, we have the a~priori error estimate:
\begin{alignat}{2}\label{eq:apriori_AdvReac}
\|u-u_n\|_p & \leq C \inf_{w_n\in\mathbb U_n}\|u-w_n\|_p\,,
\end{alignat}
where 
$\ds{C=\min\Big\{ 2^{\left|{2\over p}-1\right|} M_\mu \widetilde{C} \, , \, 1+M_\mu\widetilde{C} \Big\}}\,$.
\end{enumerate}
The constants involved are: $C_\Pi$ which is given in~Assumption~\ref{assumpt:Fortin}, the boundedness constant $M_\mu$ given in Remark~\ref{rem:mu_continuity}, the stability constant $\gamma_B$ given in~\eqref{eq:C_mu} (see also the statement (ii$\star$) in Theorem~\ref{thm:avdreact_wellposed}), and the geometrical constant $\CAO(\mbbV)$ (for $\mbbV=\Gqplus $) defined in~\cite[Definition~2.14]{MugZeeARXIV2018}.
\end{theorem}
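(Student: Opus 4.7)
The plan is to verify that the conditions of the abstract \DDResMin{} framework in Banach spaces developed in~\cite{MugZeeARXIV2018} are met, and then invoke its stability and quasi-optimality results. The two key ingredients are the continuous well-posedness from Theorem~\ref{thm:avdreact_wellposed}, which provides the adjoint inf-sup bound
\begin{equation*}
\gamma_B\,\|w\|_p \,\le\, \sup_{v\in\mbbV}\frac{\dual{Bw,v}_{\mbbV^*,\mbbV}}{\enorm{v}_{q,\bsbeta}}\,, \qquad \forall w\in\mbbU\,,
\end{equation*}
and the Fortin condition (Assumption~\ref{assumpt:Fortin}). A standard argument combines these: using~\eqref{eq:Fortin_c} we can replace $v$ by $\Pi v \in \mbbV_m$ in the right-hand supremum without changing the numerator for $w=w_n\in\mbbU_n$, and~\eqref{eq:Fortin_a} gives $\enorm{\Pi v}_{q,\bsbeta}\le C_\Pi \enorm{v}_{q,\bsbeta}$, so that restricting the trial to $w_n\in\mbbU_n$ yields the discrete inf-sup
\begin{equation*}
\frac{\gamma_B}{C_\Pi}\,\|w_n\|_p \,\le\, \sup_{v_m\in\mbbV_m}\frac{\dual{Bw_n,v_m}_{\mbbV^*,\mbbV}}{\enorm{v_m}_{q,\bsbeta}}\,, \qquad \forall w_n\in\mbbU_n\,.
\end{equation*}

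For part~(i), this discrete inf-sup combined with the strict monotonicity of $J_\mbbV$ (the normalized duality map on the reflexive, uniformly convex, uniformly smooth space $\mbbV=\Gqplus$) yields existence and uniqueness of the mixed pair $(r_m,u_n)$ via the abstract monotone saddle-point result~\cite[Theorem~4.A]{MugZeeARXIV2018}. For the bound on $r_m$, I would test~\eqref{eq:AdvReac_discrete_a} with $v_m=r_m$ and~\eqref{eq:AdvReac_discrete_b} with $w_n=u_n$ to obtain $\dual{J_\mbbV(r_m),r_m} = \dual{f,r_m}$; since $\dual{J_\mbbV(r_m),r_m}=\enorm{r_m}_{q,\bsbeta}^2$ and $\|J_\mbbV(r_m)\|_{\mbbV^*}=\enorm{r_m}_{q,\bsbeta}$ by definition of the duality map, a single duality step gives $\enorm{r_m}_{q,\bsbeta}\le\|f\|_{\mbbV^*}$. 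For the bound on $u_n$, I would rewrite~\eqref{eq:AdvReac_discrete_a} as $\dual{Bu_n,v_m} = \dual{f - J_\mbbV(r_m),v_m}$, apply the discrete inf-sup on the left, and bound the right-hand side using $\|J_\mbbV(r_m)\|_{\mbbV^*}\le\|f\|_{\mbbV^*}$; the geometric constant $\CAO(\mbbV)$ enters through the stability/approximation properties of the duality map in $\mbbV$ (cf.~\cite[Definition~2.14]{MugZeeARXIV2018}), delivering the stated constant $\Cae=C_\Pi(1+\CAO(\mbbV))/\gamma_B$.

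For part~(ii), the strategy is a C\'ea-type argument adapted to the nonlinear Banach setting. For arbitrary $w_n\in\mbbU_n$, the triangle inequality gives $\|u-u_n\|_p \le \|u-w_n\|_p + \|w_n-u_n\|_p$. To bound the second term, I would apply the discrete inf-sup to $w_n-u_n\in\mbbU_n$, and manipulate $\dual{B(w_n-u_n),v_m}$ using $Bu=f$ in~$\mbbV^*$ together with~\eqref{eq:AdvReac_discrete_a} to rewrite it as $\dual{B(w_n-u),v_m} + \dual{J_\mbbV(r_m),v_m}$; the first piece is bounded by $M_\mu\|u-w_n\|_p\enorm{v_m}_{q,\bsbeta}$ using Remark~\ref{rem:mu_continuity}, while the second is bounded by $\enorm{r_m}_{q,\bsbeta}\enorm{v_m}_{q,\bsbeta}$, with $\enorm{r_m}_{q,\bsbeta}$ itself re-estimated in terms of $\|u-w_n\|_p$ via a further testing step exploiting~\eqref{eq:AdvReac_discrete_b}. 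The two explicit constants inside the $\min$ arise from two alternative ways of combining these estimates: the first uses a Xu--Roach type characteristic inequality for the $L^p$ duality map (producing the factor $2^{|2/p-1|}M_\mu\Cae$), while the second is a direct triangle-inequality reorganization (producing $1+M_\mu\Cae$). Both are packaged as abstract quasi-optimality bounds in~\cite[Section~4]{MugZeeARXIV2018}.

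The hardest part, and the reason I would rely on~\cite{MugZeeARXIV2018} rather than argue from scratch, is the nonlinearity of $J_\mbbV$ on a genuine (non-Hilbert) Banach space: the classical Brezzi machinery for linear saddle-point problems does not apply directly, and any sharp quasi-optimality estimate in the Banach setting must account for the modulus of smoothness and convexity of $\mbbV$, which is precisely what $\CAO(\mbbV)$ encapsulates. Once the abstract framework is invoked, the remaining work is a careful bookkeeping of the constants $\gamma_B$, $C_\Pi$, $M_\mu$, and $\CAO(\mbbV)$, and a verification that the operator $B$ from~\eqref{eq:B_AdvReac} together with the discrete pair $(\mbbU_n,\mbbV_m)$ satisfies the hypotheses of the abstract theorems.
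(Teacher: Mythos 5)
Your proposal is correct and follows essentially the same route as the paper, which simply verifies the hypotheses (strict convexity and reflexivity of $\mbbU$, $\mbbV$ and their duals, plus the Fortin condition) and invokes the abstract results \cite[Theorems~4.A, 4.B and~4.D]{MugZeeARXIV2018}; your additional sketch of the internal mechanics (discrete inf-sup via $\Pi$, testing with $r_m$, C\'ea-type splitting) is consistent with how those abstract theorems are proved. The only small discrepancy is your attribution of the factor $2^{\left|\frac{2}{p}-1\right|}$ to a Xu--Roach characteristic inequality, whereas the paper identifies it as the Banach--Mazur constant $\CBM(\mbbU)$ for $\mbbU=L^p(\Omega)$.
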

%
\begin{proof}
Statement~(i) directly follows from~\cite[Theorem~4.B]{MugZeeARXIV2018} applied to the current situation, while statement~(ii) follows from~\cite[Theorem~4.D]{MugZeeARXIV2018}, which can be applied since the spaces $\mathbb U=L^p(\Omega)$ and $\mbbV=\Gqplus$ are strictly convex and reflexive for $1<p,q<+\infty$, as well as the dual spaces $\mathbb U^*$ and $\mathbb V^*$. The factor $2^{\left|{2\over p}-1\right|}$ is the value of the Banach--Mazur constant~$\CBM(\mbbU)$ (appearing in~\cite[Theorem~4.D]{MugZeeARXIV2018}) for $\mbbU = L^p(\Omega)$; see~\cite[Section~5]{SteNM2015}.
%
%
\end{proof}
%
%
\begin{remark}[Finite elements]
\label{rem:advFEM}
Theorem~\ref{thm:AdvReac_discrete} implies \emph{optimal convergence rates} in~$L^p(\Omega)$ for finite element subspaces $\mbbU_n \equiv \mbbU_h$, 
provided $C_\Pi$ is uniformly bounded.  
For example, on a sequence of approximation subspaces $\{\mbbP^k(\mcT_h)\}_{h>0}$ of piecewise polynomials of fixed degree~$k$ on quasi-uniform shape-regular meshes~$\mcT_h$ with mesh-size parameter~$h$, well-known best-approximation estimates 
(see, e.g.,~\cite[Section~4.4]{BreScoBOOK2008}, \cite[Section~1.5]{ErnGueBOOK2004} and~\cite{ErnGueM2AN2017}) imply
\begin{alignat*}{2}
 \norm{u-u_n}_p \lesssim  
 \inf_{w_h\in\mathbb U_h}\|u-w_h\|_p\lesssim h^s \snorm{u}_{W^{s,p}(\Omega)}\,,
 \qquad \text{for } 0\leq s\leq k+1\,,
\end{alignat*}
where $\snorm{\cdot}_{W^{s,p}(\Omega)}$ denotes a standard semi-norm of~$W^{s,p}(\Omega)$ (e.g., of Sobolev--Slobodeckij type). For a relevant regularity result in~$W^{1,\rho}(\Omega)$, with~$\rho \ge 2$, see Girault \&~Tartar~\cite{GirTarCR2010} (see also~\cite{PiaARXIV2016}).
%
%
%
\end{remark}
%
%
\section{Applications}
\label{sec:applications}
In this section we apply the general discrete method~\eqref{eq:AdvReac_discrete} to particular choices of discrete subspace pairs~$(\mbbU_n, \mbbV_m)$ involving low-order finite-element spaces. 
%
\par
For simplicity, throughout this section $\Omega \subset \mathbb{R}^d$ will be a \emph{polyhedral} domain and $\mcT_n$ will denote a finite partition of~$\Omega$, i.e., $\mcT_n = \{T\}$ consists of a finite number of non-overlapping elements~$T$ for which~$\overline{\Omega} = \bigcup_{T\in \mcT_n} \overline{T}$. 
%
\subsection{The pair $\mbbP^1_\mathrm{cont}(\mcT_n)$~-~$\mbbP^k_\mathrm{cont}(\mcT_n)$: Eliminating the Gibbs phenomena}
\label{sec:Gibbs}
By first considering \emph{continuous} finite elements for~$\mbbU_n$, we briefly illustrate how the discrete method~\eqref{eq:AdvReac_discrete} eliminates the well-known \emph{Gibbs phenomena} when approaching discontinuous solutions. 
%
%
For simplicity, consider the advection--reaction problem~\eqref{eq:strong_advreact} with $\Omega \equiv (-1,1)\subset \mbbR$, $\beta = 1$, $\mu = 0$, $g=-1$ and let the source~$f_\circ$ be $2 \delta_0$, where $\delta_0$ is the \emph{Dirac delta} at~$x=0$, i.e., 
\begin{subequations}
\label{eq:heavi}
\begin{empheq}[left=\left\{\,,right=\right.,box=]{alignat=2}
   u'(x)  &= 2 \delta_0(x)\,, \qquad \forall x\in (-1,1) \,,
\\
  u(-1) &=-1\,.
\end{empheq}
\end{subequations}
Notice that the exact solution of~\eqref{eq:heavi} corresponds to the sign of~$x$:
\begin{alignat*}{2}
  u(x) & = \sign(x) :=  \left\{
  \begin{array}{ll}
  -1 & \hbox{if } x<0,\\
  \phantom{-} 1 & \hbox{if } x>0.
  \end{array}
  \right.
\end{alignat*}
%
%
\par
We endow $\mbbV = \Gqplus = W^{1,q}_{0,\{1\}}(\Omega)$ with the norm $\|\cdot\|_\mbbV=\|(\cdot)'\|_q$, which simplifies the duality map~\eqref{adv:J} to a normalized \emph{$q$-Laplace operator}:
\begin{alignat}{2}
\label{adv:J1D} 
  \bigdual{J_\mbbV(r),v}_{\mbbV^*,\mbbV} =
  \bigdual{J_q(r'),v'}_{p,q} 
  = \norm{r'}_q^{2-q} \Bigdual{ |r'|^{q-1} \sign(r')\,,\, v' }_{p,q} \,.
\end{alignat}
Moreover, in this setting, it is not difficult to show that residual minimization in $[W^{1,q}_{0,\{1\}}(\Omega)]^*$ now coincides with finding the \emph{best $L^p$-approximation} to~$\sign (x)$. The Gibbs phenomena for best $L^p$-approximation was studied analytically by Saff~\& Tashev~\cite{SafTasEJA1999}, when using continuous piecewise-linear approximations on $n$~equal-sized elements. They clarified that the overshoot next to a discontinuity \emph{remains} as $n\rightarrow \infty$ whenever~$p>1$, however remarkably, the \emph{overshoot tends to zero as~$p \rightarrow 1^+$}. 
\par
\begin{figure}[!t]
\centering{%
\psfrag{x}{}
\psfrag{Nelem =  4}{\scriptsize $h = 1/2$}
\psfrag{Nelem =  8}{\scriptsize $h = 1/4$}
\psfrag{Nelem = 16}{\scriptsize $h = 1/8$}
\psfrag{Nelem = 32}{\scriptsize $h = 1/16$}
\includegraphics[scale=1]{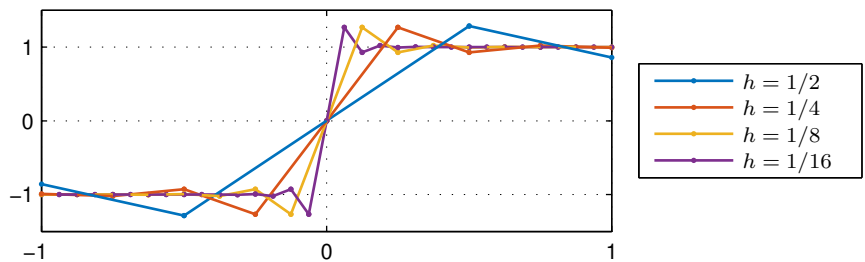}
}
\caption{The best $L^2$-approximation of $u(x) = \operatorname{sign}(x)$ displays the Gibbs phenomenon: The overshoot next to the discontinuity persists on any mesh.}
\label{fig:Gibbs1}
\end{figure}
To illustrate these findings, we plot in Figure~\ref{fig:Gibbs1} the best $L^2$-approximation using continuous piecewise-linears for various mesh-size parameter~$h$. Clearly, the overshoots remain present, signifying the Gibbs phenomenon. Next, in Figure~\ref{fig:Gibbs2} we plot the solution to ideal residual minimization (i.e., in the so-called \emph{ideal} case where $\mbbV_m = \mbbV$) on a fixed mesh consisting of nine elements, for different values of~$p>1$.\footnote{These plots were obtained by using the analytical results by Saff~\& Tashev for the $L^p$-approximation of~$\sign(x)$.} In Figure~\ref{fig:Gibbs2}, we also plot the corresponding ideal residual~$r'(x)$ as defined by the mixed formulation~\eqref{eq:AdvReac_discrete} in the case where $\mbbV_m = \mbbV$. 
It can be shown that in this ideal 1-D situation $r' =\|u_n-u\|_p^{2-p}|u_n-u|^{p-1}\sign(u_n-u)$. The plots in Figure~\ref{fig:Gibbs2} clearly illustrate the elimination of the Gibbs phenomenon as~$p\rightarrow 1^+$. 
\par
We note that the elimination of the Gibbs phenomena was also observed for residual minimization of the \emph{strong form} of the advection--reaction residual in~$L^1(\Omega)$~\cite{GueSINUM2004}, the explanation of which remains somewhat elusive.
\begin{figure}[!t]
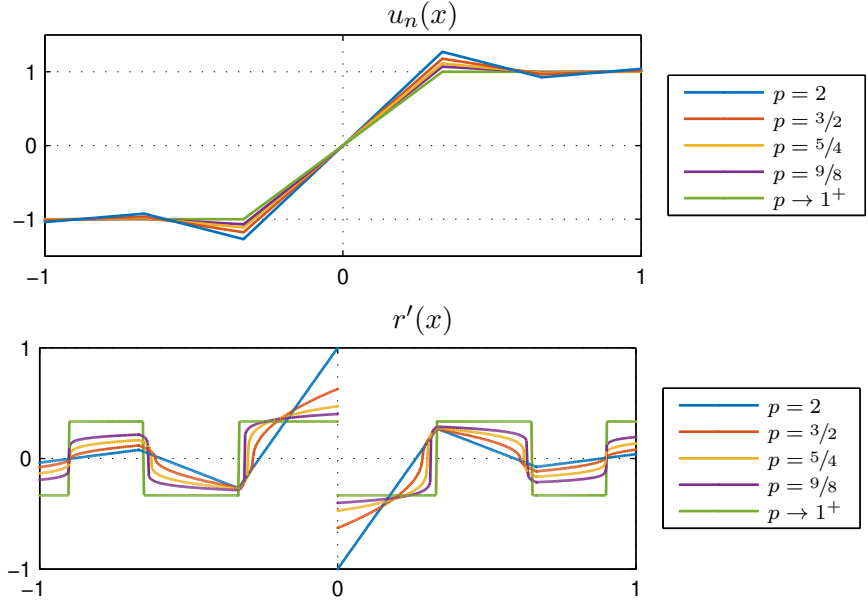

\centering{%
\psfrag{x}{}
\psfrag{p=     2}{\scriptsize $p=2$}
\psfrag{p=   1.5}{\scriptsize $p=\sfrac{3}{2}$}
\psfrag{p=  1.25}{\scriptsize $p=\sfrac{5}{4}$}
\psfrag{p= 1.125}{\scriptsize $p=\sfrac{9}{8}$}
\psfrag{p=     1}{\scriptsize $p\rightarrow 1^+$}
{\small $u_n(x)$}
\\
\includegraphics[scale=1]{data_Gibbs/AdvectionMixedFEM_elemCount6_n1_dnInf_pConv_runs5_u.eps}
\\
{\small $r'(x)$}
\\
\includegraphics[scale=1]{data_Gibbs/AdvectionMixedFEM_elemCount6_n1_dnInf_pConv_runs5_drdx.eps}
}
\caption{Vanishing Gibbs phenomena as~$p\rightarrow 1^+$ for approximations to the discontinuous solution $u(x) = \sign (x)$ given by ideal residual-minimization of weak advection. 
}
\label{fig:Gibbs2}
\end{figure}
\par
Next, consider the \DDResMin{} method~\eqref{eq:AdvReac_discrete} with the discrete space pair $(\mbbU_n,\mbbV_m)$ defined by:
\begin{subequations}
\begin{alignat}{6}
\label{eq:P1cont}
 \mbbU_n 
 &\subseteq \mbbP^1_{\mathrm{cont}}(\mcT_n)
  &&:=  \Big\{ &w_n &\in \mcC^0[-1,1] \,:\,
&&  {w_n}|_{T} \in \mbbP^1(T) \,,\,&& \forall T\in \mcT_n 
  \Big\},
&& 
\\ \notag
  \mbbV_m 
  &\supseteq \mbbP^k_{\mathrm{cont},0,\{1\}}(\mcT_n)
  &&:= \Big\{ &\phi_m &\in \mcC^0[-1,1]  \,:\,
&&  {\phi_m}|_{T} \in \mbbP^k(T)\,,\,&& \forall T\in \mcT_n  \text{ and }
\\  \label{eq:Pkcont}
 &&&&&&&&&  \quad \phi_m(1)=0
  \Big\},
&&   
\end{alignat}
\end{subequations}
where $k$~is the polynomial degree of the test space, and $\mcT_n$ is any partition of the interval $(-1,1)$.
%
\begin{proposition}[1-D advection: Compatible pair]
\label{prop:advContPair}
Let $\mbbU = L^p(-1,1)$, $\mbbV = W^{1,q}_{0,\{1\}}(-1,1)$ and ~$(\mbbU_n,\mbbV_m)$ be defined as above. If~$k\ge 2$, then the Fortin condition (Assumption~\ref{assumpt:Fortin}) holds for the operator $B:\mbbU\rightarrow \mbbV^*$ defined by:
\begin{alignat*}{2}
\bigdual{ B w, v }_{\mbbV^*,\mbbV} =-\int_{-1}^1 u v'\,.
\end{alignat*}
\end{proposition}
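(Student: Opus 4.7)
The plan is to construct an explicit Fortin operator $\Pi: \mbbV \to \mbbP^2_{\mathrm{cont},0,\{1\}}(\mcT_n) \subseteq \mbbV_m$ element by element, which is well-defined since $k\ge 2$.

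First, I would rewrite the orthogonality $\bigdual{Bw_n,v-\Pi v}_{\mbbV^*,\mbbV}=0$ to pin down what $\Pi v$ has to satisfy. Writing $\mcT_n=\{T_i\}_{i=1}^N$ with $T_i=[x_{i-1},x_i]$, integrating by parts on each element, and using that both $w_n\in\mbbP^1_{\mathrm{cont}}(\mcT_n)$ and $v,\Pi v$ are continuous (so interior boundary terms telescope) gives
\begin{alignat*}{2}
\bigdual{Bw_n,v-\Pi v}_{\mbbV^*,\mbbV}
= -w_n(-1)(v-\Pi v)(-1) + \sum_i w_n'\big|_{T_i} \int_{T_i}(v-\Pi v),
\end{alignat*}
where the endpoint at $x=1$ drops out provided $\Pi v(1)=v(1)=0$. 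Since $w_n(-1)$ and the constants $w_n'|_{T_i}$ are arbitrary as $w_n$ ranges over $\mbbP^1_{\mathrm{cont}}(\mcT_n)$, orthogonality is equivalent to (a)~$\Pi v(x_j)=v(x_j)$ at every node $x_j$, and (b)~$\int_{T_i}\Pi v=\int_{T_i}v$ on every element $T_i$. I would then define $\Pi v|_{T_i}$ as the unique element of $\mbbP^2(T_i)$ satisfying (a) and (b) --- the three functionals (two endpoint values plus integral) form a unisolvent set on $\mbbP^2(T_i)$. Global continuity and the outflow condition $\Pi v(1)=0$ follow automatically, so $\Pi v\in \mbbP^2_{\mathrm{cont},0,\{1\}}(\mcT_n)\subseteq \mbbV_m$.

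For the stability bound~\eqref{eq:Fortin_a} I would transport the construction to the reference element $\hat T=[0,1]$. The reference operator $\hat\Pi$ is linear and bounded on $W^{1,q}(\hat T)$ (by the 1-D Sobolev embedding, the pointwise and integral functionals are continuous) and it preserves $\mbbP^1(\hat T)$, since any linear function is the unique quadratic matching its own endpoint values and integral. A standard Deny--Lions / Bramble--Hilbert argument on $\hat T$ then yields
\begin{alignat*}{2}
\|(\hat \Pi \hat v)'\|_{L^q(\hat T)} \le \hat C\,\|\hat v'\|_{L^q(\hat T)},\qquad \forall\,\hat v\in W^{1,q}(\hat T),
\end{alignat*}
with $\hat C$ absolute. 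Pulling back under the affine map $x=x_{i-1}+h_i\hat x$ --- under which both sides of this bound scale identically as $h_i^{1/q-1}$ --- transfers the estimate element-wise with the same constant, and $\ell^q$-summation over the mesh gives $\|\Pi v\|_\mbbV=\|(\Pi v)'\|_q \le \hat C\,\|v'\|_q = \hat C\,\|v\|_\mbbV$, so Assumption~\ref{assumpt:Fortin} holds with $C_\Pi=\hat C$.

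The main obstacle, and the reason the hypothesis $k\ge 2$ enters, is precisely condition (b): with $k=1$ one only has endpoint interpolation, which leaves the element-mean residuals $\int_{T_i}(v-\Pi v)$ free and spoils orthogonality against the non-zero constants $w_n'|_{T_i}$. The single extra quadratic degree of freedom is exactly what is needed to kill these element means while retaining a mesh-independent stability constant via the scaling argument above.
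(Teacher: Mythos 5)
Your proof is correct and follows essentially the same route as the paper: the Fortin operator is the identical element-wise quadratic fixing the two endpoint values and the element mean (the paper writes it as the linear interpolant plus a bubble $\alpha_j Q_j$ with $\alpha_j$ chosen to match $\int_{T_j}v$), and the orthogonality \eqref{eq:Fortin_c} is verified by the same element-wise integration by parts exploiting that $w_n'$ is constant per element. The only difference is cosmetic: for the stability bound \eqref{eq:Fortin_a} the paper explicitly estimates $|\alpha_j|$, $\|\Pi_1(v)'\|_q$ and $\|Q_j'\|_q$ to get the explicit constant $C_\Pi = 1+12/(q+1)^{1/q}$, whereas you obtain a mesh-independent constant via a reference-element Deny--Lions/scaling argument.
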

%
\begin{proof}
See Section~\ref{sec:advContPairProof}.
\end{proof}
%
\par
%
\begin{remark}[Solution of nonlinear system]
\label{rem:nonlinSolve}
The \DDResMin{} method leads to a discrete (nonlinear) $q$-Laplace mixed system, which, for $p$~moderately close to~$2$, can be solved with, e.g., Newton's or Picard's method. For~$p$ close to~$1$ or much larger than~$2$ the nonlinear problem becomes more tedious to solve, and we have resorted to continuation techniques (with respect to~$p$) or a descent method for the equivalent constrained-minimization formulation.
\end{remark}
%
\par
Figure~\ref{fig:Gibbs3} diplays numerical results obtained using the \DDResMin{} method~\eqref{eq:AdvReac_discrete} with the above discrete spaces and for~$p=1.01$ (hence $q=101$). We plot~$u_n$ and~$r_m'$ for various test-space degree~$k\ge 2$. While the method is stable for any~$k\ge 2$ (owing to Proposition~\ref{prop:advContPair}), there is no reason for the \DDResMin{} method to directly inherit any qualitative feature of the exact residual minimization (i.e., $\mbbV_m=\mbbV$). Indeed, overshoot is present for small~$k$. 
However, results are qualitatively converging once the test space~$\mbbV_m$ starts resolving the ideal~$r$. This is expected by~\cite[Proposition~4.2]{MugZeeARXIV2018}, which states that the ideal~$u_n$ is obtained if the ideal~$r$ happens to be in~$\mbbV_m$. The lines indicated by ``ideal'' in Figure~\ref{fig:Gibbs3} correspond to the case that~$r$ is fully resolved. 
\begin{figure}[!t]
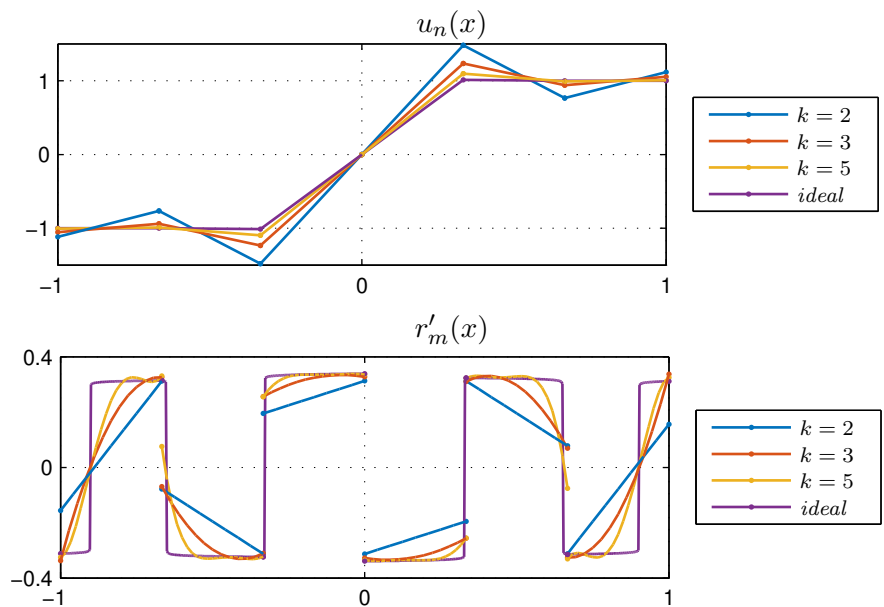

\centering{%
\psfrag{x}{}
\psfrag{dn =   1}{\scriptsize $k=2$}
\psfrag{dn =   2}{\scriptsize $k=3$}
\psfrag{dn =   4}{\scriptsize $k=5$}
\psfrag{dn = Inf}{\scriptsize \emph{ideal}}
{\small $u_n(x)$}
\\
\includegraphics[scale=1]{data_Gibbs/AdvectionMixedFEM_p1_01_elemCount6_n1_dnConv_runs4_u.eps}
\\
{\small $r_m'(x)$}
\\
\includegraphics[scale=1]{data_Gibbs/AdvectionMixedFEM_p1_01_elemCount6_n1_dnConv_runs4_drdx.eps}
}
\caption{Approximations to~$u(x) = \sign(x)$ given by the \DDResMin{} method for weak advection with trial space $\mbbU = L^p(\Omega)$ and~\mbox{$p=1.01$}. The discrete space~$\mbbU_n$ consists of continuous piecewise-linears and $\mbbV_m$ of continuous piecewise-polynomials of degree~$k$. The line \emph{ideal} corresponds to the case that $r\in\mbbV_m$ or $\mbbV_m = \mbbV$ (or $k\rightarrow \infty$).}
\label{fig:Gibbs3}
\end{figure}
\par
Interestingly, the results seem to indicate that different values of~$k$ in each element would be needed for efficiently addressing Gibbs phenomena. This is reminiscent of the idea of \emph{adaptive stabilization}~\cite{CohDahWelM2AN2012}, in which for a given~$\mbbU_n$ a sufficiently large test-space~$\mbbV_m$ is found in an adaptive manner so as to achieve stability. 
%
%
\par

\subsection{The pair~$\mbbP^0(\mcT_n)$~-~$B^{-*}(\mbbP^0(\mcT_n))$: An optimal compatible pair}
\label{sec:optimalTestSpace}
In the remainder of Section~\ref{sec:applications}, we consider for~$\mbbU_n$ piecewise-constant functions on mesh-partitions~$\mcT_n$ of~$\Omega\subset\mathbb{R}^d$, i.e., 
\begin{subequations}
\begin{alignat}{2}\label{eq:P0Un}
  \mbbU_n \subseteq \mbbP^0(\mcT_n) 
  := \Big\{ w_n \in L^\infty(\Omega) \,:\, {w_n}|_T \in \mbbP^0(T) 
  \,,\, \forall T\in \mcT_n \Big\} 
\subset \mbbU.
\end{alignat}
For the discrete test space~$\mbbV_m \subset \mbbV$, we assume that it includes the following \emph{optimal space}~$\mbbS_n := B^{-*} \big( \mbbP^0(\mcT_n) \big) \subset \mbbV$, i.e., 
\begin{alignat}{2}\label{eq:S(T_n)}
 \mbbV_m \supseteq \mbbS_n
 &:= B^{-*} \big( \mbbP^0(\mcT_n) = \Big\{ 
   \phi_n \in \mbbV \,:\, 
   B^* \phi_n = \chi_n\,,\, \hbox{for some } \chi_n\in \mbbP^0(\mcT_n)
 \Big\}.
\end{alignat}
\end{subequations}
Note that $\dim \mbbU_n \le \dim \mbbP^0(\mcT_n) = \dim \mbbS_n \le \dim \mbbV_m$. 
\par
Without any further assumptions, the following striking result show that this pair satisfies the Fortin condition. Its proof hinges on the fact that the $L^p$~ duality map of any $w_n\in \mbbP^0(\mcT_n)$ is also in~$\mbbP^0(\mcT_n)$.
\begin{proposition}[Weak advection--reaction: Compatible pair]
\label{prop:AdvReac_compatible}
Let $\mbbU = L^p(\Omega)$, $\mbbV = W^{q}_{0,+}(\bsbeta;\Omega)$ and the discrete pair~$(\mbbU_n,\mbbV_m)$ be defined as in~\eqref{eq:P0Un} and~\eqref{eq:S(T_n)}. Then the Fortin condition (Assumption~\ref{assumpt:Fortin}) holds for~$B$ defined in~\eqref{eq:B_AdvReac}, with $C_\Pi =  M_\mu/\gamma_B$, where $M_\mu$ is the continuity constant of~$B$ (see Remark~\ref{rem:mu_continuity}), and $\gamma_B$ the bounded-below constant of~$B$ (see Theorem~\ref{thm:avdreact_wellposed}).
\end{proposition}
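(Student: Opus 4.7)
The plan is to construct a \emph{linear} Fortin operator $\Pi \colon \mbbV \to \mbbS_n \subseteq \mbbV_m$ that exploits the fact that $B^\ast$ restricts to a bijection from $\mbbS_n$ onto $\mbbP^0(\mcT_n)$. From Theorem~\ref{thm:avdreact_wellposed}, $B \colon \mbbU \to \mbbV^\ast$ is a continuous bijection bounded below by $\gamma_B$, and reflexivity of $\mbbU = L^p(\Omega)$ and $\mbbV = \Gqplus$ (both hold for $1<p,q<\infty$) yields the dual bound $\|B^\ast v\|_q \ge \gamma_B \|v\|_\mbbV$ for every $v \in \mbbV$. Since $\mbbS_n$ is defined precisely as $B^{-\ast}\big(\mbbP^0(\mcT_n)\big)$, the restriction $B^\ast|_{\mbbS_n}\colon \mbbS_n \to \mbbP^0(\mcT_n)$ is a linear bijection. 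I then define $\Pi v \in \mbbS_n$ as the unique element for which $B^\ast \Pi v$ equals the elementwise average of $B^\ast v \in L^q(\Omega)$:
\begin{equation*}
(B^\ast \Pi v)\big|_T \;=\; \frac{1}{|T|} \int_T B^\ast v, \qquad \forall\, T \in \mcT_n.
\end{equation*}

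The orthogonality~\eqref{eq:Fortin_c} is immediate by construction. Indeed, for any $w_n \in \mbbU_n \subseteq \mbbP^0(\mcT_n)$,
\begin{equation*}
\bigdual{B w_n,\, v - \Pi v}_{\mbbV^\ast,\mbbV}
\;=\; \int_\Omega w_n\, B^\ast(v - \Pi v)
\;=\; \sum_{T \in \mcT_n} w_n|_T \int_T \bigl( B^\ast v - B^\ast \Pi v \bigr) \;=\; 0,
\end{equation*}
since each elementwise integral vanishes by the defining property of $\Pi v$.

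For the stability bound~\eqref{eq:Fortin_a}, I chain three estimates. First, the dualised inf-sup gives $\|\Pi v\|_\mbbV \le \gamma_B^{-1} \|B^\ast \Pi v\|_q$. Second, Jensen's inequality applied on each element yields $\|B^\ast \Pi v\|_{L^q(T)}^q \le \int_T |B^\ast v|^q$, whence upon summation $\|B^\ast \Pi v\|_q \le \|B^\ast v\|_q$. Third, continuity of $B^\ast$ (Remark~\ref{rem:mu_continuity}) provides $\|B^\ast v\|_q \le M_\mu \|v\|_\mbbV$. Concatenating these three bounds gives $\|\Pi v\|_\mbbV \le (M_\mu/\gamma_B)\|v\|_\mbbV$, which identifies the Fortin constant as $C_\Pi = M_\mu/\gamma_B$, exactly as claimed.

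The only conceptual step is the definition of $\Pi$; the remaining verifications are mechanical. The hint in the proposition about the $L^p$ duality map reflects what \emph{would} be the main obstacle in extending the argument beyond piecewise constants: in a generic Banach-space setting the genuinely optimal test space attached to $\mbbU_n$ is $B^{-\ast}$ applied to the image of $\mbbU_n$ under the $L^p$ duality map, which is intrinsically nonlinear in $\mbbU_n$. Here the construction collapses to the linear space $B^{-\ast}(\mbbP^0(\mcT_n))$ precisely because the $L^p$ duality map acts elementwise on piecewise constants and therefore leaves $\mbbP^0(\mcT_n)$ invariant; this invariance is what makes a linear Fortin operator available and trivialises the argument above.
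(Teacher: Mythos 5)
Your proof is correct, but it takes a genuinely different route from the paper. The paper does not construct a Fortin operator at all: it verifies the \emph{discrete inf-sup condition} directly, by restricting the supremum to $\phi_n\in\mbbS_n$, rewriting it as a supremum over $\chi_n\in\mbbP^0(\mcT_n)$, and then choosing $\chi_n=J_p(w_n)$ --- which lies in $\mbbP^0(\mcT_n)$ precisely because the $L^p$ duality map leaves piecewise constants invariant. The bound $\|B^{-*}\chi\|_\mbbV\le\gamma_B^{-1}\|\chi\|_q$ then yields the inf-sup constant $\gamma_B$, and the existence of a Fortin operator with $C_\Pi=M_\mu/\gamma_B$ is obtained abstractly from the converse-Fortin equivalence of Ern \& Guermond. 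You instead exhibit the operator explicitly as $\Pi=(B^*|_{\mbbS_n})^{-1}\circ P_0\circ B^*$, where $P_0$ is elementwise averaging; the orthogonality \eqref{eq:Fortin_c} is then immediate, and the stability chain $\|\Pi v\|_\mbbV\le\gamma_B^{-1}\|B^*\Pi v\|_q\le\gamma_B^{-1}\|B^* v\|_q\le(M_\mu/\gamma_B)\|v\|_\mbbV$ (the middle step being Jensen's inequality on each element) gives the same constant. Your argument is more self-contained --- it avoids the citation to the converse-Fortin theorem in Banach spaces --- and it produces a concrete, linear, mesh-independent operator, which is useful in its own right (e.g.\ it makes transparent that $\Pi$ maps into $\mbbS_n$ regardless of how much larger $\mbbV_m$ is). What the paper's route buys is brevity and a direct reading of \emph{why} the pair is compatible: the duality-map invariance of $\mbbP^0(\mcT_n)$ supplies the optimal test function in one line. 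Your closing remark correctly identifies that this same invariance is what makes your linear construction possible, so the two proofs are dual views of one mechanism.
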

%
\begin{proof}
In view of the equivalence between the discrete inf-sup condition
and the Fortin condition (see Ern \&~Guermond~\cite{ErnGueCR2016}), 
we prove this proposition by directly establishing the discrete inf-sup condition. 
\par
%
Let~$w_n \in \mbbU_n\subseteq\mbbP^0(\mcT_n)$, then
\begin{alignat}{2}
\notag
  \sup_{v_m\in \mbbV_m} \frac{\dual{B w_n, v_m}_{\mbbV^*,\mbbV}}{ \norm{v_m}_\mbbV }
 &\ge 
  \sup_{\phi_n\in \mbbS_n} \frac{\dual{B w_n, \phi_n}_{\mbbV^*,\mbbV}}{ \norm{\phi_n}_\mbbV }
=  
   \sup_{\chi_n\in \mbbP^0(\mcT_n)} 
    \frac{\dual{w_n, \chi_n}_{p,q}}{ \norm{B^{-*}\chi_n}_\mbbV}\,.
\end{alignat}
Let $J_p(w_n) := \norm{w_n}_p^{2-p} |w_n|^{p-1} \sign (w_n)$ denote the $L^p$~duality map of~$w_n$, and notice that it is also in~$\mbbP^0(\mcT_n)$. Furthermore, we have the duality-map property $\dual{w_n, J_p(w_n)}_{p,q}=\|w_n\|_p\|J_p(w_n)\|_q\,$. Therefore,
\begin{alignat}{2}
\notag
 \sup_{\chi_n\in \mbbP^0(\mcT_n)} 
    \frac{\dual{w_n, \chi_n}_{p,q}}{ \norm{B^{-*}\chi_n}_\mbbV }
 &\ge
    \frac{\dual{w_n, J_p(w_n)}_{p,q}}{ \norm{B^{-*}J_p(w_n)}_\mbbV }
=
    \frac{\norm{w_n}_p \norm{J_p(w_n)}_q}{ \norm{B^{-*}J_p(w_n)}_\mbbV } 
\ge
    \gamma_B \norm{w_n}_p \,,
\end{alignat}
where, in the last step, we used that $\norm{B^{-*} \chi}_\mbbV \le \gamma_B^{-1} \norm{\chi}_{q}$ for all $\chi\in L^q(\Omega)$ (this is nothing but the dual counterpart of Theorem~\ref{thm:avdreact_wellposed}).
Finally, \cite[Theorem~1]{ErnGueCR2016} implies the existence of a Fortin operator $\Pi:\mbbV\to\mbbV_m$ 
with $C_\Pi = M_\mu/\gamma_B$.
%
\end{proof}
%
\begin{remark}[Petrov--Galerkin method]
\label{rem:PG}
If~$(\mbbU_n,\mbbV_m) \equiv (\mbbP^0(\mcT_n) ,\mbbS_n)$, then $\dim \mbbU_n = \dim \mbbV_m$. Then, Proposition~\ref{prop:AdvReac_compatible} together with~\eqref{eq:AdvReac_discrete_b} implies that $r_m = 0$. Thus we obtain from~\eqref{eq:AdvReac_discrete_a} that the approximation~$u_n$ satisfies the Petrov--Galerkin statement (cf.~\cite[Section~5]{MugZeeARXIV2018}):
\begin{equation} \label{eq:PG}
\dual{B u_n,v_n}_{\mbbV^*,\mbbV} = \dual{ f, v_n}_{\mbbV^*,\mbbV}
\qquad  \forall v_n \in \mbbS_n\,.
\end{equation} 
\end{remark}
\begin{remark}[Cell average]
\label{rem:cellAve}
If~$(\mbbU_n,\mbbV_m) \equiv (\mbbP^0(\mcT_n) ,\mbbS_n)$, the approximation~$u_n$ is in fact the element average of the exact solution~$u$, i.e.,
\begin{alignat}{2}
\label{eq:elemAve}
  {u_n}|_T = |T|^{-1}\int_T u
  \,,
  \qquad \forall T\in \mcT_n\,.
\end{alignat}
To prove~\eqref{eq:elemAve}, note that~\eqref{eq:PG} can be written as
\begin{equation*}
 \dual{u_n,B^* v_n} = \dual{u,B^* v_n} \qquad \forall v_n \in \mbbS_n\,.
\end{equation*}
 Let $\chi_T$ be the characteristic function of the element $T$. Then, the test function~$v_T = B^{-*} \chi_{T}$ determines ${u_n}|_T$. Indeed,
\begin{alignat*}{2}
|T|\, {u_n}|_T  = \dual{u_n,\chi_T}_{p,q} 
 = \dual{u_n,B^* v_T}_{p,q} = \dual{u, B^* v_T}_{p,q} =  \dual{u, \chi_T}_{p,q} 
 =  \int_T u \,.
\end{alignat*}
\end{remark}
%
\begin{remark}[Quasi-uniform meshes]
\label{rem:advUnifConv}
In the case that $\mbbU_n = \mbbP^0(\mcT_n)$ where the partitions~$\{\mcT_n\}$ are quasi-uniform shape-regular meshes with mesh-size parameter~$h$, the following a~priori error estimate is immediate (apply Remark~\ref{rem:advFEM} with~$k=0$), provided that $u\in W^{s,p}(\Omega)$ for $0\le s\le 1$:
\begin{alignat*}{2}
 \norm{u-u_n}_p \lesssim h^s \snorm{u}_{W^{s,p}(\Omega)}  \,.
\end{alignat*}
\end{remark}
%
\begin{example}[Quasi-optimality: Solution with jump discontinuity]
\label{ex:qOptJumpSol}
To illustrate the convergence of approximations given by the compatible pair~$(\mbbU_n, \mbbV_m) \equiv (\mbbP^0(\mcT_n) , \mbbS_n)$ for~$\Omega \equiv (0,1)$ on uniform meshes using $n=2,4,8,\ldots$~elements of size~$h=1/n$, consider the following exact solution with jump discontinuity (never aligned with the mesh):%
\footnote{%
The approximations are given by~\eqref{eq:elemAve}, or can be obtained by solving the nonlinear discrete problem (see Remark~\ref{rem:nonlinSolve}).
}
\begin{alignat*}{2}
 u(x) = \operatorname{sign}\big(x-\tfrac{\sqrt{2}}{2}\big)\,, \qquad \text{for } x\in (0,1)\,.
\end{alignat*}
It can be shown (e.g.~by computing the Sobolev--Slobodeckij norm) that~$u \in W^{s,p}(0,1)$ for any $0<s<1/p$, but not~$s=1/p$. Figure~\ref{fig:cellAverage} shows the convergence of~$\norm{u-u_n}_p$ with respect to~$h$, for various~$p$. The observed convergence behavior, as anticipated in Remark~\ref{rem:advUnifConv}, is indeed close to~$O(h^{1/p})$.
%
%
%
\end{example}
%
\begin{figure}[!t]
\centering
\psfrag{err p=2.0}{\footnotesize $\norm{u-u_n}_2$}
\psfrag{err p=1.5}{\footnotesize $\norm{u-u_n}_{\sfrac{3}{2}}$}
\psfrag{err p=1.0}{\footnotesize $\norm{u-u_n}_1$}
\psfrag{N\^-1/2.0}{\footnotesize $O(N^{-1/2})$}
\psfrag{N\^-1/1.5}{\footnotesize $O(N^{-2/3})$}
\psfrag{N\^-1/1.0}{\footnotesize $O(N^{-1})$}
\psfrag{N}{\footnotesize Number of elements~$N$}
\psfrag{h\^+1/2.0}{\footnotesize $O(h^{\sfrac{1}{2}})$}
\psfrag{h\^+1/1.5}{\footnotesize $O(h^{\sfrac{2}{3}})$}
\psfrag{h\^+1/1.0}{\footnotesize $O(h)$}
\psfrag{h}{\footnotesize $h$}
\includegraphics[scale=1]{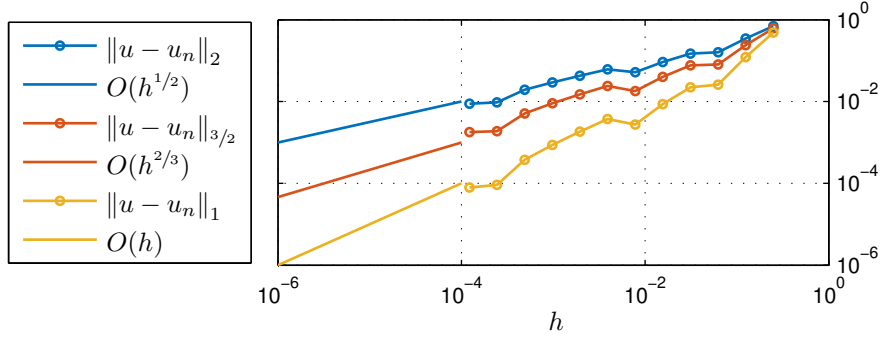}
\caption{Approximating an advection--reaction problem with a discontinuous solution using the optimal pair ($\mbbP^0(\mcT_n),\mbbS_n$): The convergence in~$\norm{u-u_n}_p$ is close to~$O(h^{1/p})$, which is optimal for near-best approximations.}
\label{fig:cellAverage}
\end{figure}
\begin{example}[Basis for optimal test space]
Let us illustrate the discrete test space $\mbbS_n$ in 1-D for the particular case where the (scalar-valued) advection~$\beta(x)$ is space-dependent and $\mu\equiv 0$. Let $\Omega=(0,1)$ and let $\beta$ be a strictly decreasing and positive function such that $\beta'(x)$ is bounded away from zero (hence Assumption~\ref{assump:mu_0} is valid). The space~$\mbbV$ is given by 
\begin{alignat*}{2}
\mbbV= \Big\{v\in L^q(0,1): (\beta v)'\in L^q(0,1) \text{ and } v(1)=0\Big\}.
\end{alignat*}
Let $0=x_1<x_1<\dots<x_{n+1}=1$ be a partition of $\Omega$ and 
define $\mcT_n=\{T_j\}$ where $T_j=(x_{j-1},x_j)$. Let $\chi_{T_j}$ be the characteristic function of $T_j$ and $h_j=|T_j|$. The discrete test space $\mbbS_n$ is defined as the span of the functions $v_j\in\mbbV$ such that $-(\beta v_j)'=\chi_{T_j}$, which upon integrating over the interval $[x,1]$ gives :
\begin{alignat*}{2}
v_j(x)= & \left\{
\begin{array}{ll}
h_j/\beta(x) & \hbox{if } x\leq x_{j-1}\\
(x_j-x)/\beta(x) & \hbox{if } x\in T_j\\
0 & \hbox{if } x\geq x_{j}\,.
\end{array}\right. 
\end{alignat*}
Moreover, we can combine them in order to produce the local, nodal basis functions:
\begin{equation}\nonumber
\widetilde v_1(x)= \beta(x_1){v_1(x)\over h_1}  \quad\hbox{and}
\quad \widetilde v_j(x)= \beta(x_j)\left({v_j(x)\over h_j}-{v_{j-1}(x)\over h_{j-1}}\right), \quad j\geq 2.
\end{equation}
See Figure~\ref{fig:inflowoutflowcont}(a) for an illustration of these basis functions with~$h_j = 0.2$ for all~$j$ and~$\beta(x) = 1.001-x$.
\begin{figure}[!t]
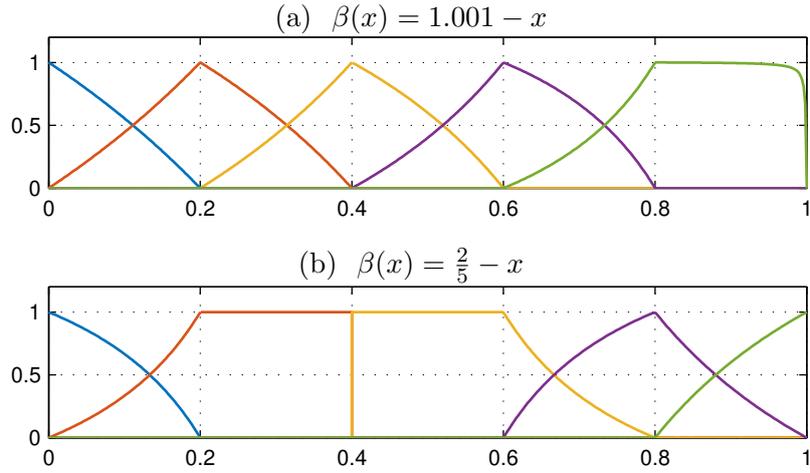

\psfrag{x}{}
\centerline{\small (a)~ $\beta(x) = 1.001 - x$}
\centerline{\includegraphics[scale=1]{data_advBasis/advInflowOutflow_BasisNrm_noLegend.eps}}
\vspace*{1ex}
\centerline{\small (b)~ $\beta(x) = \tfrac{2}{5} - x$}
\centerline{\includegraphics[scale=1]{data_advBasis/advInflowInflow_BasisNrm_noLegend.eps}}
\caption{Basis for the optimal test space~$\mbbS_n$ that is compatible with~$\mbbU_n = \mbbP^0(\mcT_n)$, in the case of two different space-dependent advection fields~$\beta(x)$ corresponding to (a)~left-sided inflow and (b)~two-sided inflow, respectively.}
\label{fig:inflowoutflowcont}
\end{figure}
%
%
%
\par
Another interesting example is when we have two inflows, each on one side of the interval $\Omega=(0,1)$. This is possible by means of a strictly decreasing $\beta(x)$ such that $\beta(0)>0$ and $\beta(1)<0$, and such that $\beta'(x)$ is bounded away from zero. The solution $u\in L^{p}(\Omega)$ of problem~\eqref{eq:weak_advection-reaction} may be singular at the point $\tilde x\in \Omega$ for which $\beta(\tilde x)=0$, even for smooth right hand sides. The test functions computed by solving $-(\beta v_j)'=\chi_{T_j}$ may be discontinuous when $\tilde x$ matches one of the mesh points. This is illustrated in Figure~\ref{fig:inflowoutflowcont}(b) for~$\beta(x) = \frac{2}{5}-x\,$.
\end{example}
\begin{example}[A practical alternative to~$\mbbS_n$]
In practise it may not be feasible to explicitly compute a basis for~$\mbbS_n$. Practical alternatives consist of, for example, continuous piecewise polynomials of sufficiently-high degree~$k$ on~$\mcT_n$,
or continuous piecewise linear polynomials on $\mathsf{Refine}_\ell(\mcT_n)$, which is the submesh obtained from the original mesh~$\mcT_n$ by performing~$\ell$ uniform refinements of all elements (see~\cite{BroDahSteMOC2018} for a similar alternative in a DPG setting).
\par
To illustrate the latter alternative for the \DDResMin{} method, consider the domain~$\Omega = (0,1)$, coefficients $\beta(x) = 1- 12 x$ and $\mu(x) = -4$, source~$f_\circ(x) = 0$, and inflow data~$g$ such that the exact solution is $u(x) =|1-12x|^{-\sfrac{1}{3}}$ for all~$x\in \Omega\setminus \{ \sfrac{1}{12} \}$. Note that $u$ has a singularity and that $u\in L^r(\Omega)$ for any~$1\le r< 3$, but not for~$r\ge 3$. 
\par
In  the method, we take~$p=2$, $\mbbU_n = \mbbP^0(\mcT_n)$ and $\mbbV_m = \mbbP^1_{\mathrm{cont}}(\mathsf{Refine}_\ell(\mcT_n))$, where $\mcT_n$ is a mesh of uniform elements of size~$h = 1/n$, and $\mathsf{Refine}_\ell(\mcT_n)$ is an $\ell$-refined submesh with uniform elements of size~$h_{\ell} = h/(2^\ell)$.
\par
Figure~\ref{fig:P0P1Level:hconv} plots the convergence of the $\norm{u-u_n}_2$ versus~$h$ for~$\ell = 1$, $2$ and~$4$ (error plots are actually similar for all~$\ell\ge 1$). We note that~$\ell = 0$ is in general not sufficiently rich, as it leads to a singular matrix for~$h=1/2$, while the results for~$\ell \ge 1$ did not show any instabilities. To anticipate the rate of convergence, note the Sobolev embedding result $W^{s,2}(\Omega) \subset L^r(\Omega)$ for $s\ge \frac{1}{2} - \frac{1}{r}$ and $r\ge 2$. Therefore, one expects a convergence of~$O(h^s)$ with~$s = \frac{1}{6}$, which is indeed consistent with the numerical observation in~Figure~\ref{fig:P0P1Level:hconv}. The oscillations are caused by the singularity location ($x=\sfrac{1}{12}$) being closer to the left or right element edge depending on~$h$. 
\par
To investigate for a fixed mesh with~$h = \sfrac{1}{16}$ the convergence of the obtained approximations~$u_n$ with respect~$\ell$, we consider~$\beta(x)=2-x$, $\mu(x) = 0$ and exact solution~$u(x) = 1+2x$ for $x\in \Omega$. Figure~\ref{fig:P0P1Level:lconv} plots the error~$\norm{u_n{}_{|\infty} - u_n{}_{|\ell}}$ with respect to~$h_\ell = h / (2^\ell)$, where~$u_n{}_{|\infty}$ denotes the ideal approximation ($\mbbV_m = \mbbV$). For this error we observe a rate of convergence~$O(h_\ell^2)$. 

%
%

\end{example}
\begin{figure}[!t]
\psfrag{h}{\small $h$}
\psfrag{k = 1}{\footnotesize ~$\ell = 1$}
\psfrag{k = 2}{\footnotesize ~$\ell = 2$}
\psfrag{k = 4}{\footnotesize ~$\ell = 4$}
\psfrag{rate = 1/6}{\footnotesize ~$O(h^{\sfrac{1}{6}})$}
\centerline{\small $\bignorm{u-u_n}_2$\hspace*{0.1\textwidth}~}
\centerline{%
\includegraphics[width=0.8\textwidth]{data_P0P1l/Error_of_Res-Min_for_Varying_Levels,_p_=_2_VERSION_3.eps}}
\caption{Approximating a singular solution~$u(x) =|1-12x|^{-\sfrac{1}{3}}$ for $x\in (0,1)\setminus \{\sfrac{1}{12}\}$ to the advection--reaction problem with the \DDResMin{} method using $\mbbU_n = \mbbP^0(\mcT_n)$ and $\mbbV_m = \mbbP^1_{\mathrm{cont}}(\mathsf{Refine}_\ell(\mcT_n))$. The convergence is close to $O(h^{\sfrac{1}{6}})$, which is optimal for near-best approximations.}
\label{fig:P0P1Level:hconv}
\end{figure}
\begin{figure}[!t]
\psfrag{h-size}{\small $h_\ell$}
\centerline{\small \hspace*{0.05\textwidth}$\bignorm{u_{n|\infty}-u_{n|\ell}}_2$}
\centerline{%
\includegraphics[width=0.85\textwidth]{data_P0P1l/Error_of_RESMIN_at_Levels_Simple_Beta_Against_hsize.eps}\rule{12pt}{0pt}}
\caption{Convergence of approximation~$u_{n|\ell}$ toward $u_{n|\infty}$ on a fixed mesh $\mcT_n$ for a smooth exact solution, where $u_{n|\ell}$ denotes the approximation obtained by the \DDResMin{} method using $\mbbU_n = \mbbP^0(\mcT_n)$ and $\mbbV_m = \mbbP^1_{\mathrm{cont}}(\mathsf{Refine}_\ell(\mcT_n))$. The observed convergence is $O(h_\ell^2)$.}
\label{fig:P0P1Level:lconv}
\end{figure}

\subsection{The pair $\mbbP^0(\mcT_n)$~-~$\mbbP^1_\mathrm{conf}(\mcT_n)$: An optimal pair in special situations}
\label{sec:2DflowAlignedMesh}
%
\par
As a last application, we consider a special multi-dimensional situation such that the optimal test space~$\mbbS_n$ defined in~\eqref{eq:S(T_n)} reduces to a convenient finite element space. We focus on a \mbox{2-D}~setting, and assume $\Omega \subset \mbbR ^2$ is polygonal and $\mcT_n$ is a 
simplicial mesh (triangulation) of~$\Omega$.
Let~$\mcF_n = \{F\}$ denote all mesh interior faces.%
\footnote{i.e., $\operatorname{length}(F) >0$, and $F = \pd T_1 \cap \pd T_2$ for distinct~$T_1$ and $T_2$ in~$\mcT_n$.} 
Assume that $\mu\equiv 0$, $\div\bsbeta\equiv 0$ and that the hypothesis of Assumption~\ref{ass:omega-filling} is fulfilled. Assume additionally that $\bsbeta$ is piecewise constant on some partition of~$\Omega$, and let the mesh~$\mcT_n$ be compatible with this partition, i.e.,  
\begin{alignat*}{2}
 &\bsbeta|_{T} \in \mbbP^0(T)\times\mbbP^0(T), \qquad &&\forall T\in \mcT_n\,,
 \\
 &\jump{ \bsbeta\cdot \bsn_F }_F = 0, \qquad &&\forall F\in \mcF_n\,.
\end{alignat*}
where~$\jump{\cdot} = (\cdot)_+ - (\cdot)_-$ denotes the jump.  
Finally, assume that the mesh is \emph{flow-aligned} in the sense that each triangle~$T\in \mcT_n$ has exactly one tangential-flow face~$F \subset \pd T$ for which $\bsbeta \cdot \bsn_T = 0$ on~$F$. Necessarily, the other two faces of~$T$ correspond to in- and out-flow on which $\bsbeta|_T \cdot \bsn_T < 0$ and $\bsbeta|_{T} \cdot \bsn_T > 0$, respectively.
\par
The main result for this special situation is the following characterization of~$\mbbS_n$:
%
\begin{proposition}[Optimal space~$\mbbS_n$: Flow-aligned case]
Under the above assumptions, 
\begin{alignat*}{2}
 \mbbS_n = \mbbP^1_{\mathrm{conf}}(\mcT_n) 
 := \Big\{
   \phi_n \in \mbbV = \Gqplus
   \,:\,
   {\phi_n}|_{T} \in \mbbP^1(T)\,,\, \forall T \in \mcT_n
 \Big \}\,.
\end{alignat*}
\end{proposition}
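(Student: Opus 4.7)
The plan is to prove the two inclusions $\mbbP^1_{\mathrm{conf}}(\mcT_n) \subseteq \mbbS_n$ and $\mbbS_n \subseteq \mbbP^1_{\mathrm{conf}}(\mcT_n)$ separately. Under the standing hypotheses $\mu\equiv 0$ and $\div\bsbeta\equiv 0$, the adjoint collapses to $B^*\phi = -\bsbeta\cdot\nabla\phi$, so throughout the proof one works with the characterization $\mbbS_n = \{\phi\in\mbbV : \bsbeta\cdot\nabla\phi\text{ is piecewise constant on }\mcT_n\}$.

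The easy inclusion $\supseteq$ will follow by direct inspection: given $\phi_n \in \mbbP^1_{\mathrm{conf}}(\mcT_n)$, on each triangle $T$ both $\nabla\phi_n|_T$ and $\bsbeta|_T$ are constant vectors (using mesh compatibility with the partition on which $\bsbeta$ is piecewise constant), so $\bsbeta\cdot\nabla\phi_n \in \mbbP^0(\mcT_n)$ elementwise, while $\phi_n\in\mbbV$ holds by definition of $\mbbP^1_{\mathrm{conf}}(\mcT_n)$.

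The hard direction is $\subseteq$, and my approach is a local analysis followed by a backward induction along the flow. First, fix $\phi\in\mbbS_n$ and $T\in\mcT_n$ with $-\bsbeta|_T\cdot\nabla\phi = \chi_T\in\mbbR$. In rotated coordinates $(s,t)$ on $T$ with $s$ aligned to $\bsbeta_T$ and $t$ perpendicular, the PDE integrates to $\phi|_T(s,t) = -\chi_T s/|\bsbeta_T| + h_T(t)$ for some function $h_T$ depending only on $t$. The geometric punchline of flow-alignment is that the tangential edge of $T$ lies parallel to $\bsbeta_T$, so in these coordinates it occupies some level $t=t_0$ while the apex is at $t=t_1$; consequently the outflow edge of $T$ is a straight segment projecting onto the \emph{entire} $t$-range $[t_0,t_1]$ spanned by $T$. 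The trace of $\phi|_T$ along this outflow edge, read as a function of $t$, equals $h_T(t)$ plus an affine function of $t$, so $\phi|_T$ is linear on $T$ if and only if this outflow trace is affine in $t$.

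With this observation in hand, I would induct over a flow-respecting partial order on $\mcT_n$, well-founded by the absence of closed streamlines (a consequence of Assumption~\ref{ass:omega-filling}): the base case covers triangles whose outflow edge lies on $\pd\Omega_+$, where the trace is zero and hence trivially affine, so $\phi|_T$ is linear. For the inductive step, the outflow edge of $T$ coincides with the inflow edge of some $T'$ already processed, and continuity of $\phi$ across this (non-tangential) face --- a consequence of $\phi\in\mbbV$, which forces the distributional jump $\bsbeta\cdot\bsn_F\,[\phi]_F$ to vanish on every interior face $F$, combined with $\bsbeta\cdot\bsn_F\neq 0$ on non-tangential faces --- transfers the linear trace from $T'$ to $T$, giving linearity of $\phi|_T$. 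This same continuity across non-tangential faces, together with the boundary condition on $\pd\Omega_+$, is exactly what places the resulting piecewise-linear $\phi$ inside $\mbbP^1_{\mathrm{conf}}(\mcT_n)$. The main obstacle will be the topological setup of the induction: one must argue that the flow-directed dual graph restricted to non-tangential adjacencies admits a well-founded ordering in which every triangle has a finite downstream path to $\pd\Omega_+$, relying crucially on flow-alignment (each triangle has exactly one outflow face) together with $\Omega$-filling. A dimension-count alternative --- $\dim\mbbS_n = |\mcT_n|$ from bijectivity of $B^{-*}$, matched against a combinatorial count of $\dim\mbbP^1_{\mathrm{conf}}(\mcT_n)$ --- seems less clean given the delicate bookkeeping of continuity conditions on a flow-aligned triangulation.
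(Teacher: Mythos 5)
Your proof is correct, and it rests on the same two pillars as the paper's argument --- backward integration of the transport equation from $\pd\Omega_+$, with flow-alignment supplying element-wise linearity and $\Omega$-filling supplying termination --- but it is organized differently. The paper works at the level of images: it shows $B^*\mbbP^1_{\mathrm{conf}}(\mcT_n)=\mbbP^0(\mcT_n)$ by constructing the explicit preimage $\phi_T=B^{-*}\chi_T$ of each characteristic function as an integral of $\chi_T$ along the polygonal streamline running from $\pd\Omega_+$ back to the point $x$, and then asserts that this construction produces a piecewise-linear function discontinuous only across tangential faces. You instead take an arbitrary $\phi\in\mbbS_n$, solve the constant-coefficient equation on each triangle in flow-adapted coordinates $(s,t)$, and propagate affinity of the transverse profile $h_T$ upstream by induction over the flow-ordered dual graph. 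The payoff of your route is that it makes explicit the geometric fact the paper leaves implicit: because the tangential edge is parallel to $\bsbeta|_T$, the outflow edge projects onto the entire transverse range of $T$, so an affine outflow trace forces $h_T$, and hence $\phi|_T$, to be affine --- this is precisely why the streamline-integrated $\phi_T$ of the paper is element-wise linear. The payoff of the paper's route is that it doubles as a recipe for computing a basis of $\mbbS_n$ and sidesteps the combinatorial bookkeeping of your well-founded order, although the two termination arguments (no streamline stays in $\Omega$ forever versus no cycle in the finite successor graph of outflow faces) are the same consequence of Assumption~\ref{ass:omega-filling} in continuous and discrete form. One minor economy: once you know $\phi|_T\in\mbbP^1(T)$ for every $T$, membership in $\mbbP^1_{\mathrm{conf}}(\mcT_n)$ follows immediately from $\phi\in\mbbV$ by the definition in the statement, so your closing discussion of continuity across non-tangential faces and the condition on $\pd\Omega_+$ is needed only inside the inductive step, not again at the end.
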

%
Note that~$\mbbP^1_{\mathrm{conf}}(\mcT_n)$ consists of $\Gqplus$-conforming, piecewise-linear functions, which can be discontinuous across tangential-flow faces, but must be continuous across the other faces. Furthermore, they are zero on~$\pd\Omega_+$.
%
\begin{proof}
The proof follows upon demonstrating that~$B^*\mbbP^1_{\mathrm{conf}}(\mcT_n) = \mbbP^0(\mcT_n)$.
First note (under the above assumptions) that  $B^* = -\bsbeta\cdot \grad_n$, where~$\grad_n$ is the element-wise (or broken) gradient, i.e., 
$(\grad_n \phi)|_{T} = \grad(\phi|_{T})$ for all~$T\in \mcT_n$. Since functions in~$\mbbP^1_{\mathrm{conf}}(\mcT_n)$ are element-wise linear, we thus have that $B^* \mbbP^1_{\mathrm{conf}}(\mcT_n) \subset \mbbP^0(\mcT_n)$.
\par
We next show that $\mbbP^0(\mcT_n)\subset B^* \mbbP^1_{\mathrm{conf}}(\mcT_n)$. Note that $\mbbP^0(\mcT_n) = \Span \{\chi_T , T\in \mcT_n\}$, where~$\chi_T$ is the characteristic function for~$T$. Let~$\phi_T$ be the unique solution in~$\mbbV$ such that~$B^* \phi_T = \chi_T$. The $\Omega$-filling assumption (see Assumption~\ref{ass:omega-filling}) guarantees that $\bsbeta \neq \bs 0$ a.e.~in $\Omega$ (otherwise we would have $-\bsbeta\cdot \nabla z_\pm=0$ in some element, contradicting~\eqref{eq:omega-filling}).
Thus, for a.e.~$x\in\Omega$ consider the polygonal path $\Gamma(x)\subset \overline\Omega$ that starts from $x$ and moves along the advection field $\bsbeta$. By the $\Omega$-filling assumption, the path $\Gamma(x)$ has to end in some point on the out-flow boundary $\partial\Omega_+$ (otherwise it will stays forever within $\Omega$, contradicting the existence of a bounded function $z_\pm\in W^\infty(\bsbeta;\Omega)$ whose absolute value grows linearly along $\Gamma(x)$).  Hence, we can construct $\phi_T$ integrating $\chi_T$ over the polygonal path $\Gamma(x)$ from $\partial\Omega_+$ to $x$. By construction, $\phi_T$ is a piecewise linear polynomial, which can be discontinuous only across $\{F\in\mcF_n: \bsbeta\cdot\bsn_F=0 \}$. Besides, $\phi_T$ satisfies the homogeneous boundary condition over $\partial\Omega_+$. Hence $\phi_T\in \mbbP^1_{\mathrm{conf}}(\mcT_n)$ and 
$\chi_T\in B^*\mbbP^1_{\mathrm{conf}}(\mcT_n)$.
%
%
\end{proof}
%
\begin{example}[2-D numerical illustration]
To illustrate the above setting with a numerical example, let $\Omega = (0,1)\times (0,2)\subset \mathbb{R}^2$, $f_\circ = 0$, and $g$ be nonzero on the inflow boundary~$\pd\Omega_- = \{(x,0)\,,\, x\in (0,1)\}$. Let an initial triangulation of the domain be as in Figure~\ref{fig:2Dsmooth} (top-left mesh). The advection~$\bsbeta$ is such that, for the bottom, left, right and top boundary, we have that $\bsbeta \cdot \bsn$ is~$-1$, $0$, $0$ and~$1$, respectively. Next, within each triangle, $\bsbeta$ is some constant vector with a positive vertical component, while satisfying the above requirements (i.e., $\jump{\bsbeta \cdot \bsn_F}_F = 0$ on each interior face~$F$, and each triangle has a tangential-flow, in-flow and out-flow face).%
\footnote{For a given mesh, such a~$\bsbeta$ can be constructed by traversing through the mesh in an element-by-element fashion, starting at the inflow boundary, and assigning $\bsbeta$ in each element so as to satisfy the requirements.} 
By Remark~\ref{rem:PG}, the \DDResMin{} method with spaces~$\mbbP^0(\mcT_n)$ and $\mbbP^1_{\mathrm{conf}}(\mcT_n)$ can be implemented as a Petrov--Galerkin method. 
\par
We first consider the smooth inflow boundary condition~$g(x,0) = \sin (\pi x)$ for~$x\in (0,1)$. Figure~\ref{fig:2Dsmooth} (top row) shows the approximations for~$u_n$ obtained on the initial triangulation and three finer meshes. The finer meshes were obtained by uniform refinements of the initial triangulation using so-called \emph{red}-refinement~\cite[Section~2.1.2]{VerBOOK2013} (splitting each triangle into four similar triangles), which preserves the above flow-aligned mesh requirement. The approximations nicely illustrate the cell-average property mentioned in Remark~\ref{rem:cellAve} (the exact solution is simply found by traversing $g$ along the characteristics). In Figure~\ref{fig:2Dsmooth} (bottom) the convergence of~$\norm{u-u_n}_p$ is shown to be optimal (rate is $O(h)$) for various values of~$p$. 
\par
Figure~\ref{fig:2Dnonsmooth} shows the same results as before, but now for a discontinuous inflow boundary condition~$g(x,0) = \sin(\pi x) \, \sign(x-\sfrac{1}{3})$ for~$x\in (0,1)$. Again the \DDResMin{} method provides a near-best approximation; as anticipated, the observed rate of convergence is~$O(h^{1/p})$ (cf.~discussion in Example~\ref{ex:qOptJumpSol}).
\end{example}

\newcommand{\includetwodfig}[1]{{\includegraphics[height=0.4\textwidth,viewport=0 0 160 340,clip]{#1}}}
\newcommand{\includetwodfigcb}[1]{\raisebox{-0.005\textwidth}{{\includegraphics[height=0.4\textwidth,viewport=160 0 220 340,clip]{#1}}}}

\begin{figure}[!t]
\centerline{%
\includetwodfig{data_2D/smooth/u_=_sin(pix)_Approximation_RL0.eps}
~~
\includetwodfig{data_2D/smooth/u_=_sin(pix)_Approximation_RL1.eps}
~~
\includetwodfig{data_2D/smooth/u_=_sin(pix)_Approximation_RL2.eps}
~~
\includetwodfig{data_2D/smooth/u_=_sin(pix)_Approximation_RL3.eps}
~~
\includetwodfigcb{data_2D/smooth/u_=_sin(pix)_Approximation_RL0.eps}
}
\psfrag{h}{$h$}
\psfrag{p = 1}{\footnotesize ~$p = 1$}
\psfrag{p = 1.5}{\footnotesize ~$p = \sfrac{3}{2}$}
\psfrag{p = 2}{\footnotesize ~$p = 2$}
\psfrag{p = 3______}{\footnotesize ~$p = 3$}
~\\
\centerline{\small $\norm{u-u_n}_p$}
\centerline{\includegraphics[width=1\textwidth]{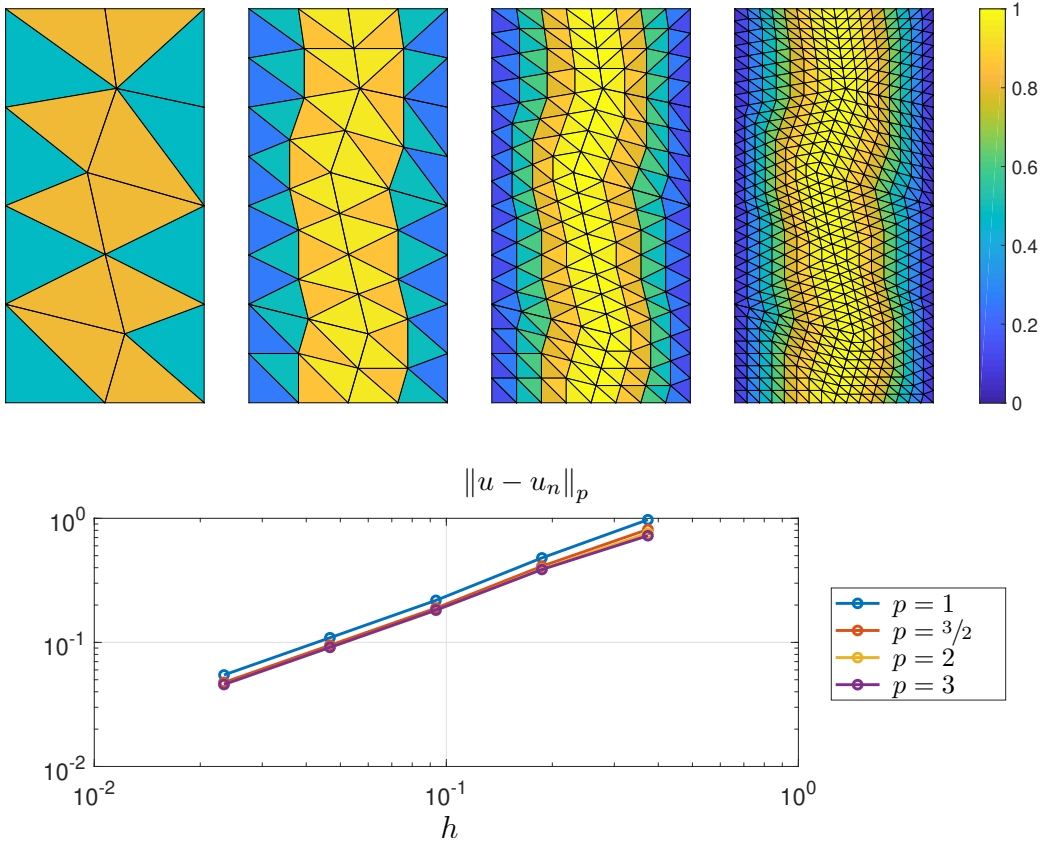}}
\caption{\DDResMin{} approximations using~$\mbbU_n = \mbbP^0(\mcT_n)$ and~$\mbbV_m = \mbbP^1_{\mathrm{conf}}(\mcT_n)$ for an incompressible advection problem with special piecewise constant~$\bsbeta$ and a \emph{smooth} inflow boundary condition~$g$. \emph{Top row}: Approximations~$u_n$ on different (nested) meshes. \emph{Bottom}: The convergence in $\norm{u-u_n}_p$ is~$O(h)$ (optimal).}
\label{fig:2Dsmooth}
\end{figure}


\renewcommand{\includetwodfig}[1]{{\includegraphics[height=0.4\textwidth,viewport=0 0 160 340,clip]{#1}}}
\renewcommand{\includetwodfigcb}[1]{\raisebox{-0.005\textwidth}{{\includegraphics[height=0.4\textwidth,viewport=160 0 220 340,clip]{#1}}}}

\begin{figure}[!t]
\centerline{%
\includetwodfig{data_2D/nonsmooth/u_=_sin(pix)sign(x-x0)_Approximation_RL0.eps}
~~
\includetwodfig{data_2D/nonsmooth/u_=_sin(pix)sign(x-x0)_Approximation_RL1.eps}
~~
\includetwodfig{data_2D/nonsmooth/u_=_sin(pix)sign(x-x0)_Approximation_RL2.eps}
~~
\includetwodfig{data_2D/nonsmooth/u_=_sin(pix)sign(x-x0)_Approximation_RL3.eps}
~~
\includetwodfigcb{data_2D/nonsmooth/u_=_sin(pix)sign(x-x0)_Approximation_RL0.eps}
}
\psfrag{h}{$h$}
\psfrag{p = 1}{\footnotesize $p = 1$}
\psfrag{p = 1.5}{\footnotesize $p = \sfrac{3}{2}$}
\psfrag{p = 2}{\footnotesize $p = 2$}
\psfrag{p = 3______}{\footnotesize $p = 3$}
~\\
\centerline{\small $\norm{u-u_n}_p$}
\centerline{\includegraphics[width=1\textwidth]{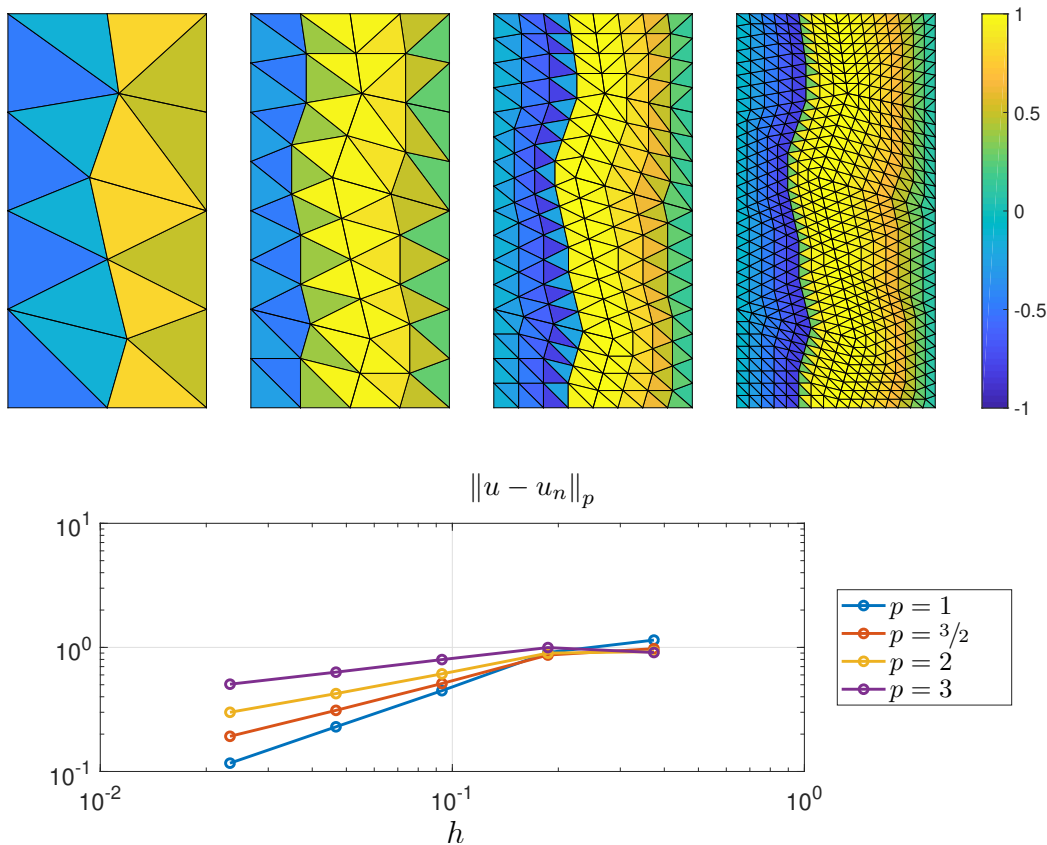}}
\caption{\DDResMin{} approximations using~$\mbbU_n = \mbbP^0(\mcT_n)$ and~$\mbbV_m = \mbbP^1_{\mathrm{conf}}(\mcT_n)$ for an incompressible advection problem with special piecewise constant~$\bsbeta$ and a \emph{discontinuous} inflow boundary condition~$g$. \emph{Top row}: Approximations~$u_n$ on different (nested) meshes. \emph{Bottom}: The convergence in $\norm{u-u_n}_p$ is~$O(h^{1/p})$, which is optimal for near-best approximations to the discontinuous solution~$u$.}
\label{fig:2Dnonsmooth}
\end{figure}

%
%
%

%

\appendix
\section{Proofs of the main results}
\subsection{Proof of Theorem~\ref{thm:avdreact_wellposed}}\label{sec:avdreact_wellposed}
In this section, we give the proof of Theorem~\ref{thm:avdreact_wellposed} by means of the so-called \emph{Banach-Ne\v{c}as-Babu\v{s}ka $\inf$-$\sup$ conditions} (see~\cite[Theorem~2.6]{ErnGueBOOK2004}):
\begin{alignat}{2}
\tag{BNB1}\label{eq:BNB1}
& \|w\|_\mbbU \lesssim \sup_{0\neq v\in \mbbV}{|b(w,v)|\over \|v\|_\mbbV},\,\, \forall w\in\mbbU,\\
\tag{BNB2}\label{eq:BNB2}
& \big\{ v\in\mbbV : b(w,v)=0, \,\,\forall  w \in \mathbb U\big\} = \{0\}. 
\end{alignat}
Our technique is similar to the one used by Cantin~\cite{CanCR2017}, but note that we prove~\eqref{eq:BNB1}-\eqref{eq:BNB2} on the adjoint bilinear form. Recall that the primal operator is a continuous bijection if and only if the adjoint operator is a continuous bijection, in which case both inf-sup constants are the same.
The following proof is also analogue to the proof in Hilbert spaces given by Di~Pietro~\& Ern~\cite[Section 2.1]{DipErnBOOK2012}. We start by giving some properties that we need for the Banach setting.

Let $J_q(v)=\|v\|_q^{2-q} |v|^{q-1}\sign(v)\in L^p(\Omega)=\mathbb U$ be the duality map of $L^q(\Omega)$, i.e., 
\begin{alignat}{2}\label{eq:J_q}
\<J_q(v),v\>_{p,q}=\|v\|_q^2 \quad \hbox{ and } \quad \|J_q(v)\|_p=\|v\|_q,\qquad\forall v\in L^q(\Omega). 
\end{alignat} 
Additionally, for any $v\in \mathbb V=\Gqplus
\subset L^q(\Omega)$ notice the following identity:
\begin{equation}\label{eq:beta_identity2}
\bsbeta\cdot\nabla v\,\,|v|^{q-1}\sign(v)={1\over q}\div(\bsbeta|v|^q)-{1\over q}\div(\bsbeta)|v|^q,\quad\forall v\in \mathbb V.
\end{equation}
We will use these definitions and properties also for their analogous ``$p$'' version, i.e., obtained by replacing $q$ by~$p$.


\subsubsection{Proof of $\inf$-$\sup$ condition~\eqref{eq:BNB1} on the adjoint}

Let $b:\mathbb U\times\mathbb V\rightarrow\mbbR$ be the bilinear form corresponding to the weak--form in~\eqref{eq:weak_advection-reaction}, i.e., 
\begin{alignat}{2}\label{eq:AdvRea_bform}
  b(w,v) = \int_\Omega w \big( \mu v - \div (\bsbeta v) \big)\,.
\end{alignat}
For any $0\neq v\in \mathbb V$ we have:
\begin{alignat*}{2}
\notag
\sup_{0\neq w\in\mathbb U}{|b(w,v)|\over \|w\|_p} &\geq 
 {|b(J_q(v),v)|\over \|J_q(v)\|_p}
\\
\tag{by~\eqref{eq:J_q}~and~\eqref{eq:AdvRea_bform}}
 &= 
 \|v\|_q^{1-q}\left|\int_\Omega|v|^{q-1}\sign(v)\left(\mu v-\div(\bsbeta v)\right) \right|
 \\
\tag{by~\eqref{eq:beta_identity}}
&= 
\|v\|_q^{1-q}\left|\int_\Omega|v|^{q-1}\sign(v)\left(\mu v-\div(\bsbeta)v-\bsbeta\cdot\nabla v\right) \right| 
\\
\tag{by~\eqref{eq:beta_identity2}}
&=    
\|v\|_q^{1-q}\left|\int_\Omega|v|^{q}\left(\mu-{1\over p}\div(\bsbeta)\right)-{1\over q}\div(\bsbeta|v|^q)\right|
\\
\tag{by~\eqref{eq:beta-mu}}
& \geq 
\mu_0\|v\|_q + \|v\|_q^{1-q}\,{1\over q}\int_{\partial\Omega^-}|\bsbeta\cdot\bsn||v|^q
\\
& \geq \mu_0\|v\|_q\,.
\end{alignat*}
Hence, we obtain control on $v$ in the $\norm{\cdot}_q$-norm.\footnote{This result is an extension of the 1-D result with constant advection in~\cite[Chapter~XVII~A, \S{}3, Section~3.7]{DauLioBOOK1992}.}
To control the entire graph norm~$\enorm{\cdot}_{q,\bsbeta}$, we also need to control the divergence part:
\begin{alignat*}{2}
\tag{by duality}
\|\div(\bsbeta v)\|_q &= \displaystyle\sup_{0\neq w\in\mathbb U}
{\left<w,\div(\bsbeta v)\right>_{p,q}\over \|w\|_p}
\\
\tag{by~\eqref{eq:AdvRea_bform}}
&=  \displaystyle\sup_{0\neq w\in\mathbb U}
{\left|b(w,v)-\int_\Omega\mu w v\right|\over \|w\|_p}
\\
\notag
&  \leq \displaystyle\sup_{0\neq w\in\mathbb U}
{|b(w,v)|\over \|w\|_p}+\sup_{0\neq w\in\mathbb U}{\left|\int_\Omega\mu w v\right|\over \|w\|_p}
\\
\tag{by Cauchy--Schwartz ineq.}
&\leq \displaystyle\sup_{0\neq w\in\mathbb U}
{|b(w,v)|\over \|w\|_p}+\|\mu\|_\infty\|v\|_q 
\\
\tag{using the previous bound}
&\leq  \displaystyle\left(1+{\|\mu\|_\infty\over\mu_0}\right)\sup_{0\neq w\in\mathbb U}
{|b(w,v)|\over \|w\|_p}.
\end{alignat*}
Combining both bounds we have
\begin{equation}\label{eq:inf-sup_adjoint}
\enorm{v}_{q,\bsbeta}\leq {\sqrt{1+(\mu_0+\|\mu\|_\infty)^2\over\mu_0^2}}
\sup_{0\neq w\in\mathbb U}
{|b(w,v)|\over \|w\|_p}.
\end{equation}

The case when $\mu\equiv 0$ and $\div\bsbeta\equiv 0$ (under Assumption~\ref{ass:omega-filling}) is simpler, since by Lemma~\ref{lem:Poincare_Friedrichs}, we immediately have
\begin{alignat}{2}\notag
\|v\|_q \,\leq\, \CPF
\|\bsbeta\cdot\nabla v\|_q \,=\,  \CPF
\|\div(\bsbeta v)\|_q \,=\, \CPF
\sup_{0\neq w\in\mathbb U}
{|b(w,v)|\over \|w\|_p} \,.
\end{alignat} 
Hence
\begin{equation}\label{eq:inf-sup_adjoint_mu=0}
\enorm{v}_{q,\bsbeta}\leq (1+\CPF)
\sup_{0\neq w\in\mathbb U}
{|b(w,v)|\over \|w\|_p}\,.
\end{equation} 
\hfill%
$\ensuremath{_\blacksquare}$
\subsubsection{Proof of $\inf$-$\sup$ condition~\eqref{eq:BNB2} on the adjoint}
\label{sec:surjec_adjoint}
Next, we prove~\eqref{eq:BNB2} for the adjoint, which corresponds to injectivity of the primal operator. In other words, we need to show that $w=0$ if $w\in L^p(\Omega)$ is such that
\begin{alignat}{2}
\label{eq:adv-infsup2}
  b(w,v)=0, \qquad \forall v\in \mathbb V=\Gqplus\,.
\end{alignat}
\par
We first take $v\in C_0^\infty(\Omega)$ to obtain that 
$\bsbeta\cdot\nabla w + \mu w =0$ in the sense of distributions and hence
$\bsbeta\cdot\nabla w = -\mu w \in L^p(\Omega)$, which implies $w\in \Gp$. 
\par
This means that $w$ has sufficient regularity so that traces make sense (see Remark~\ref{rem:traces}). Hence, going back to~\eqref{eq:adv-infsup2} and integrating by parts we have:
\begin{equation}\label{eq:vanish_inflow}
\int_{\partial\Omega^-}{\bsbeta\cdot\bsn}\,w\,v=0\, ,
\qquad\forall v \in \Gqplus.
\end{equation} 
To show that $w\in \Gpminus$, we consider 
$\widetilde J_p(w):=|w|^{p-1}\sign(w)\in L^q(\Omega)$. The fact that $\widetilde J_p(w)$ is actually in~$\Gq$ is proven in Lemma~\ref{lem:beta_nabla} below. For the function $\phi\in C^\infty(\overline\Omega)$ defined in~\eqref{eq:cutoff}, we then have that $\phi\,\widetilde J_p(w)$ belongs to $\Gqplus$ (since $\phi$ vanishes on $\partial\Omega_+$).
%
Using $v=\phi\,\widetilde J_p(w)$ in~\eqref{eq:vanish_inflow} we immediately obtain:
\begin{equation}\label{eq:zero_trace}
\int_{\partial\Omega^-}{\bsbeta\cdot\bsn}\,\,|w|^p=0\,,
\end{equation}
hence that~$w\in \Gpminus$.
\par
Finally, we conclude using an energy argument:
\begin{alignat*}{2}
\notag
0 &= \displaystyle\int_\Omega (\bsbeta\cdot\nabla w +\mu w)J_p(w)
\\
\tag{by~\eqref{eq:beta_identity2} and~\eqref{eq:zero_trace}}
&=  \|w\|^{2-p}_p\left[\displaystyle\int_\Omega |w|^p\Big(\mu-{1\over p}
\div(\bsbeta)\Big)+
\int_{\partial\Omega_+}{\bsbeta\cdot\bsn}\,|w|^p\right]
\\
\tag{by~\eqref{eq:beta-mu}}
&\geq \mu_0\|w\|_p^2+\|w\|^{2-p}_p
\int_{\partial\Omega_+}{\bsbeta\cdot\bsn}\,|w|^p
\\
\notag
&\geq  \mu_0\|w\|_p^2\,.
\end{alignat*}
Hence $w=0$.
\par
On the other hand, the case when $\mu\equiv 0$ and $\div\bsbeta\equiv 0$ is straightforward (under Assumption~\ref{ass:omega-filling}) since $\bsbeta\cdot\nabla w=0$ implies
\begin{alignat}{2}
\tag{by Lemma~\ref{lem:Poincare_Friedrichs}} 
0 = \|\bsbeta\cdot\nabla w\|_p\geq {1\over \CPF}\|w\|_p\,\,.
\end{alignat}
\hfill%
$\ensuremath{_\blacksquare}$
\par
We are left with a proof of the statement~$\widetilde J_p(w) \in \Gq$:
\begin{lemma}[Regularity of $|w|^{p-1} \sign w$]\label{lem:beta_nabla}
Let $\mu,\bsbeta\in L^\infty(\Omega)$ and $w\in L^p(\Omega)$
satisfy the homogeneous advection--reaction equation
$$
\bsbeta\cdot\nabla w +\mu w= 0 \qquad \hbox{in } L^p(\Omega).
$$
Then the function $\widetilde J_p(w):=|w|^{p-1}\sign(w)\in L^q(\Omega)$ satisfies:
$$
\bsbeta\cdot\nabla\widetilde J_p(w) \in L^q(\Omega).
$$
\end{lemma}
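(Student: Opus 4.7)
\textbf{The target identity.} The plan is to establish the chain-rule identity
\[
\bsbeta\cdot\nabla\widetilde{J}_p(w) \;=\; -(p-1)\,\mu\,\widetilde{J}_p(w) \qquad\text{in }\mcD'(\Omega),
\]
from which the conclusion follows immediately: since $\mu\in L^\infty(\Omega)$ and
$\|\widetilde{J}_p(w)\|_q^q=\int_\Omega |w|^{(p-1)q}=\|w\|_p^p<\infty$, the right-hand side lies in $L^q(\Omega)$, and so does $\bsbeta\cdot\nabla\widetilde{J}_p(w)$. The identity is formally obtained from the classical chain rule applied to $F(s):=|s|^{p-1}\sign(s)$, using $F'(s)=(p-1)|s|^{p-2}$ and $\bsbeta\cdot\nabla w=-\mu w$, via
\[
 F'(w)\,\bsbeta\cdot\nabla w \;=\; -(p-1)|w|^{p-2}\,\mu\,w \;=\; -(p-1)\mu\,\widetilde{J}_p(w).
\]

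\textbf{Main steps.} First, I would introduce a smooth truncation/regularisation $F_\varepsilon\in C^1(\mbbR)$ of $F$ satisfying $F_\varepsilon(0)=0$, $F_\varepsilon\to F$ locally uniformly on~$\mbbR$, and $F_\varepsilon'(s)\to (p-1)|s|^{p-2}$ pointwise a.e.\ with a domination of the form $|F_\varepsilon'(s)|\le C_\varepsilon$ and $|F_\varepsilon(s)|\le C(|s|^{p-1}+|s|)$ uniformly. A convenient concrete choice is $F_\varepsilon(s):=(s^2+\varepsilon^2)^{(p-2)/2}\,s-\varepsilon^{p-1}\sign(s)\,\chi_{|s|\le\varepsilon}$ smoothed, or a simple truncation outside $\{|s|>\varepsilon\}$. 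Second, I would mollify $w$ in space: $w_\eta:=w\ast\rho_\eta$ (extended suitably in a neighbourhood of each compact subdomain). Because $\bsbeta,\div\bsbeta\in L^\infty(\Omega)$, the DiPerna--Lions commutator estimate applies in the form used for renormalised solutions: the commutator $r_\eta:=\bsbeta\cdot\nabla w_\eta-(\bsbeta\cdot\nabla w)\ast\rho_\eta$ tends to $0$ strongly in $L^p_{\mathrm{loc}}(\Omega)$. Hence $\bsbeta\cdot\nabla w_\eta\to\bsbeta\cdot\nabla w=-\mu w$ strongly in $L^p_{\mathrm{loc}}(\Omega)$.

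\textbf{Passing to the limit.} For each fixed $\varepsilon,\eta$, the classical chain rule gives, for every $\phi\in C_c^\infty(\Omega)$,
\[
-\int_\Omega F_\varepsilon(w_\eta)\,\div(\bsbeta\phi) \;=\; \int_\Omega F_\varepsilon'(w_\eta)\,(\bsbeta\cdot\nabla w_\eta)\,\phi.
\]
I would then send $\eta\to 0$ first. The left-hand side converges to $-\int F_\varepsilon(w)\,\div(\bsbeta\phi)$ by dominated convergence using the growth bound on $F_\varepsilon$. For the right-hand side, strong $L^p_{\mathrm{loc}}$-convergence of $\bsbeta\cdot\nabla w_\eta$, combined with the uniform boundedness of $F_\varepsilon'$ and pointwise convergence of $F_\varepsilon'(w_\eta)$ along a subsequence, yields convergence to $\int F_\varepsilon'(w)\,(-\mu w)\,\phi$. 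Then send $\varepsilon\to 0$: the dominations $|F_\varepsilon(s)|\lesssim |s|^{p-1}+|s|$ and $|F_\varepsilon'(s)s|\lesssim |s|^{p-1}$ (the latter uniform because $F_\varepsilon'(s)\approx (p-1)|s|^{p-2}$ for $|s|\gg\varepsilon$ and $F_\varepsilon'(s)s$ stays bounded by $|s|^{p-1}$ near $0$) allow dominated convergence on both sides, producing
\[
 -\int_\Omega \widetilde{J}_p(w)\,\div(\bsbeta\phi) \;=\; -\int_\Omega (p-1)\mu\,\widetilde{J}_p(w)\,\phi,
\]
which is exactly the distributional identity $\bsbeta\cdot\nabla\widetilde{J}_p(w)=-(p-1)\mu\widetilde{J}_p(w)\in L^q(\Omega)$.

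\textbf{Main obstacle.} The most delicate point is controlling the product $F_\varepsilon'(w_\eta)\,(\bsbeta\cdot\nabla w_\eta)$ uniformly in $\varepsilon,\eta$ in the regime $1<p<2$, where $F'$ is singular at $s=0$: $F_\varepsilon'$ is not bounded independently of $\varepsilon$. The remedy is to never separate $F_\varepsilon'(s)$ from $s$ in the bounds: since in the limit the product reduces to $F'(w)\cdot(-\mu w)=-(p-1)\mu|w|^{p-1}\sign(w)$ which has the integrable majorant $\|\mu\|_\infty|w|^{p-1}\in L^q(\Omega)$, one should track the joint quantity $F_\varepsilon'(w_\eta)\,w_\eta$ (which is uniformly $\lesssim|w_\eta|^{p-1}$) rather than the factors separately. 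Equivalently, one may choose $F_\varepsilon$ so that $F_\varepsilon'(s)\,s$ is uniformly dominated by $|s|^{p-1}+\varepsilon^{p-1}$, then absorb the $\varepsilon^{p-1}$-error in the final limit. Once this uniform domination is in place, the renormalisation/commutator machinery for transport vector fields with $\bsbeta,\div\bsbeta\in L^\infty$ closes the argument (cf.~the hypotheses already assumed on~$\bsbeta$ in Section~\ref{sec:prelim}).
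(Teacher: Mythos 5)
Your target identity $\bsbeta\cdot\nabla\widetilde J_p(w)=-(p-1)\,\mu\,\widetilde J_p(w)$ is exactly the one the paper establishes, and a renormalisation argument is a natural way to try to prove it; but your proposal has a genuine gap at the step it leans on most heavily. The DiPerna--Lions commutator lemma does \emph{not} apply under the hypotheses of this lemma: it requires Sobolev regularity of the vector field, namely $\bsbeta\in W^{1,\alpha}_{\mathrm{loc}}$ with $\tfrac{1}{\alpha}+\tfrac{1}{p}\le 1$ (or, after Ambrosio, $\bsbeta\in BV_{\mathrm{loc}}$), whereas here only $\bsbeta\in L^\infty(\Omega)$ (and, elsewhere in the paper, $\div\bsbeta\in L^\infty(\Omega)$) is assumed. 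For merely bounded fields the commutator $r_\eta=\bsbeta\cdot\nabla w_\eta-(\bsbeta\cdot\nabla w)\ast\rho_\eta$ need not tend to zero in any $L^r_{\mathrm{loc}}$, and the renormalisation property itself can fail for bounded divergence-free fields (Depauw's example). Since the strong convergence $\bsbeta\cdot\nabla w_\eta\to-\mu w$ in $L^p_{\mathrm{loc}}$ is precisely what drives your passage to the limit in $\int F_\varepsilon'(w_\eta)\,(\bsbeta\cdot\nabla w_\eta)\,\phi$, the argument does not close as written. (Your handling of the singularity of $F'$ at the origin for $1<p<2$, by tracking the joint quantity $F_\varepsilon'(s)\,s$ rather than the two factors separately, is fine and would be the right device if the commutator step were available.)

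The paper circumvents the commutator entirely by exploiting the structure of the equation: since $\bsbeta\cdot\nabla w=-\mu w$, the relevant directional derivative of $\widetilde J_p$ at $w$ is taken along a direction that is pointwise a scalar multiple of $w$ itself, so the homogeneity $\widetilde J_p\big((1-t\mu)w\big)=|1-t\mu|^{p-2}(1-t\mu)\,\widetilde J_p(w)$ reduces the difference quotient to the elementary scalar limit $t^{-1}\big(|1-t\mu|^{p-2}(1-t\mu)-1\big)\to-(p-1)\mu$, valid pointwise a.e.\ and dominated in $L^q(\Omega)$ because $\mu\in L^\infty(\Omega)$. No mollification of $w$ and no differentiability of $\bsbeta$ enter that computation; the only remaining step is the identification of this G\^ateaux derivative with $\bsbeta\cdot\nabla\widetilde J_p(w)$, which the paper asserts directly. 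To repair your route you would need either to add a regularity hypothesis on $\bsbeta$ so that the commutator lemma genuinely applies, or to replace it by an argument that, like the paper's, uses the pointwise proportionality of the direction $\bsbeta\cdot\nabla w=-\mu w$ to $w$.
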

\begin{proof}
First observe that $\widetilde J_p(w)$ has a G\^ateaux derivative in the direction $\bsbeta\cdot\nabla w$. Indeed,
\begin{alignat*}{2}
{\widetilde J}_p'(w)[\bsbeta\cdot\nabla w]
&= 
\displaystyle\lim_{t\to0}{\widetilde J_p(w+t\bsbeta\cdot\nabla w)-\widetilde J_p(w)\over t}
\\
&= \displaystyle\lim_{t\to0}{\widetilde J_p(w-t\mu w)-\widetilde J_p(w)\over t}
\\
&= \left(\displaystyle\lim_{t\to0}{|1-t\mu|^{p-2}(1-t\mu)-1\over t}\right)|w|^{p-1}\sign(w)
\\
&= -(p-1)\,\mu\,|w|^{p-1}\sign(w).
\end{alignat*}
Hence, ${\widetilde J}_p'(w)[\bsbeta\cdot\nabla w]\in L^q(\Omega)$. The conclusion of the lemma follows from the identity: 
$$
\bsbeta\cdot\nabla\widetilde J_p(w)
={\widetilde J}_p'(w)[\bsbeta\cdot\nabla w]\qquad \hbox{a.e. in } \Omega,
$$
which is straightforward to verify.
\end{proof}
%

\subsection{Proof of Proposition~\ref{prop:advContPair}}
\label{sec:advContPairProof}
We construct explicitly a Fortin operator $\Pi:\mbbV\to\mbbV_m$ satisfying Assumption~\ref{assumpt:Fortin}. We note that this 1-D proof is similar to the 1-D version of the proof of~\cite[Lemma~4.20, p.~190]{ErnGueBOOK2004}.
\par
Let $-1=x_0<x_1<\dots<x_n=1$ be the set of nodes defining the partition $\mcT_n$. Over each element $T_j=(x_{j-1},x_j)\in \mcT_n$ we define $\Pi$ to be the linear interpolant $\Pi_1$ plus a quadratic bubble, i.e.,
$$
\Pi(v)\Big|_{T_j}=\Pi_1(v)\Big|_{T_j} + \alpha_j Q_j(x) \in \mbbP^2(T_j), \qquad\forall v\in \mbbV,
$$
where $\Pi_1(v)\Big|_{T_j}= |T_{j-1}|^{-1}\big(v(x_{j-1})(x_j-x) + v(x_j)(x-x_{j-1})\big)$ and $Q_j(x)=(x-x_{j-1})(x-x_j)$. The coefficient $\alpha_j$ multiplying the bubble $Q_j(x)$ is selected in order to fulfill the equation:
\begin{alignat}{2}\label{eq:element_Pi}
\int_{T_j}\Pi(v)=\int_{T_j}v\,.
\end{alignat}
Observe that $\Pi(v)\in \mbbP^2_{\mathrm{cont},0,\{1\}}(\mcT_n)\subseteq \mbbP^k_{\mathrm{cont},0,\{1\}}(\mcT_n)$ since $k\ge 2$, and for all $w_n\in \mbbU_n$ we have: 
\begin{alignat*}{2}
\tag{by integration by parts}
b(w_n,\Pi(v)) & = \sum_{j=1}^n\int_{T_j}w_n'\Pi(v) - w_n\Pi(v)\Big|_{x_{j-1}}^{x_j}  \\
\tag{since $w_n\in\mbbP^1(T_j)$}
& = \sum_{j=1}^nw_n'\int_{T_j}\Pi(v) - w_n\Pi(v)\Big|_{x_{j-1}}^{x_j}\\
\tag{by interpolation and~\eqref{eq:element_Pi}}
& = \sum_{j=1}^nw_n'\int_{T_j}v - w_nv\Big|_{x_{j-1}}^{x_j}\\
\tag{by integration by parts}
& = b(w_n,v)\,.
\end{alignat*}
Hence, the requirement~\eqref{eq:Fortin_c} is satisfied. Now we recall that $\|(\cdot)\|_\mbbV:=\|(\cdot)'\|_q\,$. Therefore to obtain the requirement~\eqref{eq:Fortin_a} 
(i.e.~the boundedness of the operator $\Pi$), we note that on each element:
\begin{alignat*}{2}
|\alpha_j| & \leq  {6\over|T_j|^3}\int_{T_j}|v-\Pi_1(v)|
\leq {6\over|T_j|^{3-{1\over p}}}\|v-\Pi_1(v)\|_q
\leq {6\over|T_j|^{2-{1\over p}}}\|v'-\Pi_1(v)'\|_q\\
 \|\Pi_1(v)'\|_q & = {|v(x_j)-v(x_{j-1})|\over |T_j|^{1-{1\over q}}}={1\over |T_j|^{1-{1\over q}}}\left|\int_{T_j}v'\right|
 \leq \|v'\|_q\\
 \|Q_j'\|_q & = {|T_j|^{1+{1\over q}}\over (q+1)^{1\over q}}.
\end{alignat*}
Thus, on each element (and therefore globally) we have:
\begin{alignat*}{2}
\|\Pi(v)'\|_q & \leq \|\Pi_1(v)'\|_q+|\alpha_j|\|Q_j'\|_q\\
& \leq \|v'\|_q + C_q \|v'-\Pi_1(v)'\|_q\\
& \leq  (1+2C_q)\|v'\|_q,
\end{alignat*}
where the constant $C_q=6/(q+1)^{1\over q}$ is mesh-independent.%
\hfill%
$\ensuremath{_\blacksquare}$

\section*{Acknowledgements}
\addcontentsline{toc}{section}{Acknowledgements}
IM and KvdZ thank Leszek Demkowicz, Jay Gopalakrishnan, Paul Houston, Weifeng Qui and Sarah Roggendorf for helpful discussions. 
The work by IM was done in the framework of Chilean FONDECYT research project~\#1160774. IM was also partially supported by the European Union’s Horizon 2020, research and innovation program under the Marie Sklodowska-Curie grant agreement No~777778. 
MT and KvdZ are grateful for the support provided by the London Mathematical Society (LMS) Undergraduate Research Bursary Grant \emph{``Advanced discontinuous discretisation techniques for multiscale partial differential equations'' }17-18~103, and thank Donald Brown for his contributions. KvdZ also thanks the support provided by the Royal Society International Exchanges Scheme / Kan Tong Po Visiting Fellowship Programme, and the above FONDECYT project.
\small
\bibliography{BibFile_OAP}
\bibliographystyle{siam}
\end{document}
%